\newtheorem{Theorem}{Theorem}[section]
\theoremstyle{plain}
\newtheorem{Example}[Theorem]{Example}
\newtheorem{Definition}[Theorem]{Definition}
\newtheorem{Remark}[Theorem]{Remark}
\newtheorem{Corollary}[Theorem]{Corollary}
\newtheorem{Proposition}[Theorem]{Proposition}
\numberwithin{equation}{section}
\newcommand{\E}{\mathbb{E}}
\title{Long edges in Galton-Watson trees}
\author{Sergey Bocharov\footnote{S.Bocharov: Department of Foundational Mathematics, Xian Jiaotong-Liverpool University, Ren'Ai Road 111, Suzhou 215123, China, e-mail: Sergey.Bocharov@xjtlu.edu.cn.} 
\ and Simon C. Harris\footnote{S.C.Harris: Department of Statistics, University of Auckland, 38 Princes Street, Auckland, 1001, New Zealand, e-mail: simon.harris@auckland.ac.nz} 
}
\begin{document}
\maketitle
\begin{abstract}
In this article, we will establish a number of results concerning the limiting behaviour of the longest edges in the genealogical tree generated by a continuous-time Galton-Watson (GW) process. Separately,  we consider the large time behaviour of the longest pendant edges, the longest (strictly) interior edges, and the longest of all the edges. 
These results extend the special case of long pendant edges of birth-death processes established in Bocharov, Harris, Kominek, Mooers, and  Steel \cite{BHMKS22} .

\end{abstract}

\section{Introduction}
We consider a continuous-time Galton-Watson process with branching rate $\beta>0$ and offspring distribution $(p_k)_{k \geq 0}$ initiated by a single particle.
Such process may be constructed as follows. It starts with a single particle at time $0$. This initial particle lives for a random time $T$ which is exponentially distributed with rate parameter $\beta$. That is, $\mathbb{P}(T > t) = \mathrm{e}^{- \beta t}$ and $\mathbb{E} T = {1}/{\beta}$. 
At the moment when the initial particle dies, it produces a random number $\xi$ of new particles where $\mathbb{P}(\xi = k) = p_k$ for every integer $k \geq 0$. Newly-born particles, independently of each other and of all the previous history, replicate the initial particle's behaviour, that is, they live for random times with $Exp(\beta)$ distribution, produce random numbers of new particles according to the offspring distribution $(p_k)_{k \geq 0}$ at the moment of their death, and so on.
Graphically such a process is most naturally represented by a tree in which each edge corresponds to the life span of a particle, as in Figure \ref{tree}. We use $\mathcal{T}_t$ to denote the entire process (tree) evolved up to time $t$.

\begin{figure}[htbp]\label{tree}
\begin{center}
\includegraphics[scale=0.65]{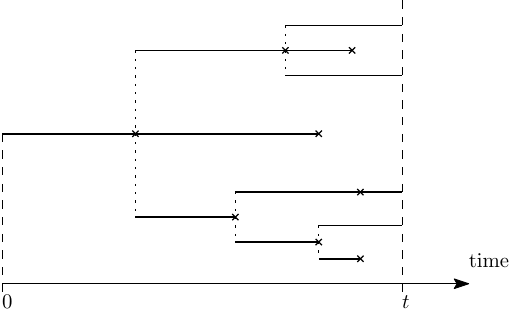}
\caption{A typical realisation of $\mathcal{T}_t$.}
\end{center}
\end{figure}

In this article we shall study the limiting behaviour of the longest edges in  $\mathcal{T}_t$ as $t \to \infty$. We classify edges in $\mathcal{T}_t$ as either pendant (edges corresponding to particles alive at time $t$) or interior (all other edges). If one thinks of $\mathcal{T}_t$ as describing some sort of evolutionary process then pendant edges would correspond to extant species and interior edges would correspond to extinct species. 

Throughout, we assume the generic offspring random variable $\xi$ has mean $m:=\E(\xi)>1$ (the Galton-Watson process is  \emph{supercritical} and has some strictly positive chance of surviving forever) 
as well as  the classical condition that $\E(\xi\log \xi)<\infty$. 
(Note, these assumptions permit cases where the offspring variance can be infinite.)
Conditional on survival of the population, for any real number $x$, we will show that, for large times $t$, the number of \emph{pendant edges} longer than $(1-1/m)t +x$ converges in distribution to a mixture of Poisson random variables with (random) parameter $e^{-m\beta x}M_\infty$, where the random variable $M_\infty$ is the classical additive martingale limit for the GW process. 
(Intuitively speaking, $M_\infty$ roughly measures the growth of the process during its earlier stages, alternatively, $\log M_\infty$ can be thought of as a random time delay for the process to get very large and start behaving deterministically).
As a corollary, we show the length of the $k^{th}$ longest pendant edge at large times $t$ converges in distribution when centred around $(1-1/m)t$ and we give an explicit representation for this limiting law. In particular,  the length of the longest pendant edge at time $t$ minus $(1-1/m)t$ converges in the large time $t$ limit to a mixture of Gumbel distributions. 

Similarly, we will prove analogous results for the longest \emph{interior edges} and the longest of \emph{all edges}. 
It turns out that long edges should always be centred around  size $(1-1/m)t$ and always converge to a mixture of Poisson distributions, where only the mixture parameter varies according to whether long pendant, long interior, or longest of all edges are being considered, respectively. 
The analysis and proofs for the longest interior edges, and similarly longest of all edges, turn out to be more delicate and require considerably more involved calculations than the longest pendant edges case (where there is much more independence). The \emph{LlogL} moment assumption is the expected to be best possible for our results to hold in  light of the natural appearance of $M_\infty$ in the description of the limit and the classical Kesten-Stigum theorem for GW processes. However, as this assumption permits the offspring variance to be infinite in general,  it necessitates an additional careful truncation argument to allow application of a second moment method involving use of the Paley-Zygmund inequality.

\subsection{Preliminaries}
We use $m$ to denote the mean of the offspring distribution:
\[
m := \mathbb{E} \xi = \sum_{k \geq 0} kp_k
\]
As we shall see, this is the only quantity which determines the first-order limiting behaviour of all the longest edges in the tree. We also use $v$ to denote the second moment of the offspring distribution: 
\[
v := \mathbb{E} \big[ \xi^2 \big] = \sum_{k \geq 0} k^2p_k
\]
In this article it is always assumed that $m < \infty$, whereas $v$ may be finite or infinite.

We denote the number of particles alive at time $t$ by $N_t$ and the event of the population surviving forever by 
\[
\{ \text{survival} \} := \bigcap_{t \geq 0} \big\{N_t > 0 \big\}.
\]
We let $\{$extinction$\} = \{$survival$\}^c$ be the event of the population eventually becoming extinct.

It is well-known that the process 
\begin{equation}
\label{martingale}
M_t = \mathrm{e}^{-(m-1)\beta t} N_t \quad , t \geq 0
\end{equation}
is a (positive) martingale, which then has an almost sure limit $M_\infty = \lim_{t \to \infty} M_t$. 

The following assumptions will be made in this article:
\begin{equation}
\label{supercritical}
m > 1 \tag{A1}
\end{equation}
and
\begin{equation}
\label{xlogx}
\mathbb{E} \big[ \xi \log^+ \xi \big] = \sum_{k \geq 1} k \log k \ p_k < \infty \tag{A2}
\end{equation}
Condition \eqref{supercritical} is necessary and sufficient for $\mathbb{P}($survival$) > 0$ and is known as the supercritical case. If condition \eqref{supercritical} is satisfied then the celebrated Kesten-Stigum Theorem says that condition \eqref{xlogx} is necessary and sufficient for $\mathbb{P}(M_\infty > 0 | $survival$) = 1$ or, in other words, for events $\{M_\infty > 0\}$ and $\{$survival$\}$ to agree almost surely (see e.g. \cite{O98}). This gives a fine approximation of the population growth: $N_t \sim M_\infty \mathrm{e}^{(m-1)\beta t}$ as $t \to \infty$ a.s. on survival (the words `a.s. on survival' are commonly used to mean `$\mathbb{P}( \cdot | $survival$) = 1$'). Let us remark that if condition $\eqref{xlogx}$ is not satisfied then $\mathbb{P}(M_\infty=0 ) = 1$ and establishing the precise growth rate of $N_t$ is a difficult task.
\subsection{Main results}
We let
\begin{equation}
\label{alpha}
\alpha^\ast := 1 - \frac{1}{m}.
\end{equation}
As we shall soon see, all the longest edges in $\mathcal{T}_t$ are approximately of length $\alpha^\ast t$ for $t$ large. 
\begin{Definition}
\label{NN}
For a length $l \geq 0$ and a time $t \geq 0$ we define $N_t^l$, $\hat{N}_t^l$, and $\tilde{N}_t^l$ respectively to be the numbers of pendant, interior, and all (counting both pendant and interior) edges in $\mathcal{T}_t$ of length $\geq l$. Note, in what follows we shall often take $l$ to be dependent on $t$. 
\end{Definition}
For all the results stated below we assume that conditions \eqref{supercritical} and \eqref{xlogx} hold. We let $\alpha^\ast$ be as in \eqref{alpha} and $M_\infty$ be the almost sure limit of the martingale $M_t$ in \eqref{martingale}.
\begin{Theorem}
\label{N_distribution}
Let $x \in \mathbb{R}$ be fixed, $t \geq 0$ variable and let us take
\[
l = l(t) = \alpha^\ast t + x.
\]
Then, under $\mathbb{P}(\cdot | $survival$)$,
\begin{equation}
\label{N_distribution_eq}
N_t^l \Rightarrow V_x , \quad \hat{N}_t^l \Rightarrow \hat{V}_x \ \text{ and } \tilde{N}_t^l \Rightarrow \tilde{V}_x
\end{equation}
as $t \to \infty$, where $V_x$, $\hat{V}_x$ and $\tilde{V}_x$ are mixtures of Poisson distributions with respective rates $\mathrm{e}^{-m \beta x} M_\infty$, $\frac{1}{m-1} \mathrm{e}^{-m \beta x} M_\infty$ and $\frac{m}{m-1} \mathrm{e}^{-m \beta x} M_\infty$ conditional on $M_\infty >0$. To be precise, for $k = 0$, $1$, $2$, ...,
\begin{equation}
\label{V_pendant}
\mathbb{P} \big( V_x = k \big\vert \text{survival} \big) = \mathbb{E} \Big[ \frac{1}{k!} \Big( \mathrm{e}^{-m \beta x} M_\infty \Big)^k \exp \big\{ - \mathrm{e}^{-m \beta x} M_\infty \big\}\Big\vert M_\infty>0 \Big],
\end{equation}
\begin{equation}
\label{V_interior}
\mathbb{P} \big( \hat{V}_x = k \big\vert \text{survival} \big) = \mathbb{E} \Big[ \frac{1}{k!} \Big( \frac{\mathrm{e}^{-m \beta x} M_\infty}{m - 1}  \Big)^k \exp \Big\{ - \frac{ \mathrm{e}^{-m \beta x} M_\infty}{m - 1} \Big\}\Big\vert M_\infty>0 \Big],
\end{equation}
and 
\begin{equation}
\label{V_all}
\mathbb{P} \big( \tilde{V}_x = k \big\vert \text{survival} \big) = \mathbb{E} \Big[ \frac{1}{k!} \Big( \frac{m \mathrm{e}^{-m \beta x} M_\infty}{m-1}  \Big)^k \exp \Big\{ - \frac{m \mathrm{e}^{-m \beta x} M_\infty}{m-1}  \Big\}\Big\vert M_\infty>0 \Big].
\end{equation}
\end{Theorem}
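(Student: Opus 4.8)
The plan is to establish the three mixed-Poisson limits simultaneously through the classical law of rare events, exploiting the branching decomposition of the tree at an intermediate time $r=r(t)$ chosen with $r(t)\to\infty$ but $r(t)/t\to 0$. Conditionally on $\mathcal{F}_{r(t)}$, the evolution after time $r(t)$ splits into $N_{r(t)}$ independent subtrees, each (by the branching property and the memorylessness of the exponential lifetimes) a copy of $\mathcal{T}$ run for duration $t-r(t)$, independent of $\mathcal{F}_{r(t)}$. Since, as I verify below, almost every long edge (length $\geq l=\alpha^\ast t+x$) is born after $r(t)$, the counts $N_t^l,\hat N_t^l,\tilde N_t^l$ are, up to a negligible correction, sums of $N_{r(t)}$ i.i.d. subtree contributions. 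On a subtree's own clock the threshold reads $l=\alpha^\ast(t-r(t))+x'$ with $x'=\alpha^\ast r(t)+x\to\infty$, so each subtree contributes a \emph{rare} (vanishing-mean) number of long edges while the number of subtrees grows like $e^{(m-1)\beta r(t)}$; the balance between the two is controlled by the martingale $M_{r(t)}\to M_\infty$, and this is what produces the random mixture parameter.

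First I would compute the expected counts by a many-to-one argument. The mean rate at which new particles are born at time $s$ is $m\beta\,e^{(m-1)\beta s}$; a pendant edge of length $\geq l$ comes from a particle born by time $t-l$ that then fails to branch up to time $t$, contributing survival factor $e^{-\beta(t-\cdot)}$, and integrating gives $\mathbb{E}[N_t^l]\to e^{-m\beta x}$. For interior edges one additionally integrates the $\mathrm{Exp}(\beta)$ lifetime density over $[l,t-b]$, yielding $\mathbb{E}[\hat N_t^l]\to\frac{1}{m-1}e^{-m\beta x}$, and since $\tilde N_t^l=N_t^l+\hat N_t^l$ one obtains $\frac{m}{m-1}e^{-m\beta x}$. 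The same computation for a single subtree of duration $t-r$ gives per-subtree means $\mu_r,\tfrac{1}{m-1}\mu_r,\tfrac{m}{m-1}\mu_r$ with $\mu_r=e^{-(m-1)\beta r-m\beta x}$, so that $N_{r(t)}\mu_{r(t)}=M_{r(t)}\,e^{-m\beta x}\to M_\infty e^{-m\beta x}$, which identifies the three rate constants. The same estimate disposes of edges born before $r(t)$, whose expected number is $O\big(e^{(m-1)\beta r(t)-\beta(t-r(t))}\big)\to 0$ for pendant edges, and is handled similarly for interior edges.

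Conditionally on $\mathcal{F}_{r(t)}$ the generating function factorises over subtrees, so it suffices to verify the null-array conditions $N_{r(t)}\,\mathbb{P}(X_t\geq 1)\to M_\infty e^{-m\beta x}$ and $N_{r(t)}\,\mathbb{P}(X_t\geq 2)\to 0$, where $X_t$ denotes the long-edge count in one subtree. The upper bounds are immediate from the first moment, since $\mathbb{P}(X_t\geq 1)\leq\mathbb{E}[X_t]$. The crux is the matching lower bound $\mathbb{P}(X_t\geq 1)\geq(1-o(1))\mathbb{E}[X_t]$ together with $\mathbb{P}(X_t\geq 2)=o(\mathbb{E}[X_t])$; both reduce to showing that the expected number of \emph{ordered pairs} of distinct long edges in a subtree is asymptotically negligible relative to the expected number of single long edges. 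A many-to-two computation for this pair-count brings in the factor $\mathbb{E}[\xi(\xi-1)]=v-m$ at the branch point separating the two lineages, and here lies the main obstacle: under assumption \eqref{xlogx} the variance $v$ may be infinite, in which case the second moment is infinite and the method fails outright.

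To circumvent infinite variance I would truncate, replacing the process by one with a capped offspring law of finite variance, coupled so as to sit inside the original tree; its long-edge counts then give lower bounds for $N_t^l,\hat N_t^l,\tilde N_t^l$. For the truncated process the pair-count is genuinely $o(\mathbb{E}[X_t])$, so $\mathbb{E}[X_t^2]=(1+o(1))\mathbb{E}[X_t]$ and the Paley–Zygmund inequality gives $\mathbb{P}(X_t\geq 1)\geq(\mathbb{E}[X_t])^2/\mathbb{E}[X_t^2]=(1-o(1))\mathbb{E}[X_t]$; letting the truncation level tend to infinity, the truncated rate constants and martingale limit converge to the true ones and, sandwiched against the first-moment upper bound, pin down the limit. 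Finally I would send $r(t)\to\infty$, using $M_{r(t)}\to M_\infty$ and bounded convergence to pass from the $\mathcal{F}_{r(t)}$-conditional mixed-Poisson law to the unconditional one, and then condition on $\{\text{survival}\}=\{M_\infty>0\}$. I expect the pendant case to be cleanest, since after the split the two survival events are independent and the pair-count factorises neatly, whereas the interior and all-edges cases demand substantially more bookkeeping of the relative configuration of the two edges (nested, sibling, or ancestor--descendant) and of the cross terms between pendant and interior contributions; this combinatorial case analysis, rather than any new idea, is where most of the effort will go.
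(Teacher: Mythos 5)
Your overall architecture matches the paper's: split the tree at an intermediate time, reduce to a law of rare events over the i.i.d.\ subtree counts, prove $\mathbb{P}(X_t\geq 1)\sim\mathbb{E}[X_t]$ and $\mathbb{P}(X_t\geq2)=o(\mathbb{E}[X_t])$ by a second-moment/Paley--Zygmund argument, and truncate the offspring law to cope with $v=\infty$. (The paper splits at $t-l$ for pendant edges, where $N_t^l$ is an exact Bernoulli thinning of $N_{t-l}$ and the generating function can be computed with no second moment or truncation at all, and at $\delta t$ for interior/all edges; your choice $r(t)=o(t)$ would also work, and your first-moment asymptotics are correct.)

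The genuine gap is in the truncation step, where you fix a cap, take $t\to\infty$, and only then let the cap tend to infinity. This order of limits yields a vacuous lower bound. When $v=\infty$ the offspring law has unbounded support, so the truncated mean $m^K=\sum_{k\leq K}kp_k$ is \emph{strictly} less than $m$ for every finite $K$, and the longest edges of the truncated tree concentrate around $(1-1/m^K)t$, which lies below $\alpha^\ast t$ by a fixed positive multiple of $t$. Concretely, with $l=\alpha^\ast t+x$ the truncated per-tree mean satisfies $\hat m^{K,l}_t\asymp e^{-\beta(m^Kl-(m^K-1)t)}=e^{-\beta(m-m^K)(t-l)}\,e^{-\beta(ml-(m-1)t)}=o\big(e^{-\beta(ml-(m-1)t)}\big)$ since $t-l\to\infty$; the truncated tree has essentially no edges of length $\alpha^\ast t+O(1)$, its ``rate constant'' at this threshold is $0$, and the Paley--Zygmund bound you extract is $o(\mathbb{E}[X_t])$, so it cannot be sandwiched against the first-moment upper bound. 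The repair --- and the place where the hypothesis $\mathbb{E}[\xi\log^+\xi]<\infty$ actually enters, which your argument never invokes --- is to let the cap grow with $t$: take $K(t)=\lfloor e^{c(ml-(m-1)t)}\rfloor$ with $0<c<\beta/2$. Then on one hand $m-m^{K}\leq F(K)/\log K$ with $F(K)=\mathbb{E}[\xi\log^+\xi\,\mathbf{1}_{\{\xi>K\}}]\to0$ forces $(m-m^{K(t)})(t-l)\to0$, so the truncated first moment is asymptotic to the true one; on the other hand $v^{K}\leq K^2$ keeps the offensive variance term $K^2e^{-\beta(ml-(m-1)t)}\to0$ negligible in the second-moment bound. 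Without this $t$-dependent calibration of the truncation level the lower half of your sandwich collapses.
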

Let us comment that more can be shown to be true. For example, if we take $l_1 = \alpha^\ast t + x$ and $l_2 = \alpha^\ast t + y$ for $x<y$ then $(N_t^{l_1} , N_t^{l_2}) \Rightarrow (V_x, V_y)$, where $(V_x, V_y-V_x)$ is a mixture of pair independent Poisson distributions with rates $\mathrm{e}^{-m \beta x} M_\infty$ and $(\mathrm{e}^{-m \beta x} - \mathrm{e}^{-m \beta y})M_\infty$ conditional on $M_\infty > 0$.

From Theorem \ref{N_distribution} one can now recover the limiting distributions of all longest edges in $\mathcal{T}_t$ (shifted by $\alpha^\ast t$).
\begin{Definition}
\label{LL}
For $k = 1$, $2$, $3$, ... let $L_t^{(k)}$, $\hat{L}_t^{(k)}$ and $\tilde{L}_t^{(k)}$ be the lengths of $k$th longest edge among pendant, interior and all edges in $\mathcal{T}_t$ respectively. 
\end{Definition}
\begin{Corollary}
\label{L_distribution}
For all $k=1$, $2$, $3$, ..., under $\mathbb{P}(\cdot | $survival$)$,
\begin{equation}
\label{L_distribution_eq}
L_t^{(k)} - \alpha^\ast t \Rightarrow W^{(k)}, \quad \hat{L}_t^{(k)} - \alpha^\ast t \Rightarrow \hat{W}^{(k)} \ \text{ and }
\tilde{L}_t^{(k)} - \alpha^\ast t \Rightarrow \tilde{W}^{(k)},
\end{equation}
where 
\[
\mathbb{P}\big(W^{(k)} \leq x | \text{survival}\big) = \mathbb{E} \Big[ \sum_{j=0}^{k-1} \frac{1}{j!} \Big( \mathrm{e}^{-m \beta x} M_\infty \Big)^j \exp \big\{ \!-\! \mathrm{e}^{-m \beta x} M_\infty \big\}\Big\vert M_\infty>0 \Big],
\]
\[
\mathbb{P}\big(\hat{W}^{(k)} \leq x | \text{survival}\big) = \mathbb{E} \Big[ \sum_{j=0}^{k-1} \frac{1}{j!} \Big( \frac{\mathrm{e}^{-m \beta x} M_\infty}{m - 1}  \Big)^j \exp \Big\{ - \frac{ \mathrm{e}^{-m \beta x} M_\infty}{m - 1} \Big\} \Big\vert M_\infty>0 \Big]
\]
and
\[
\mathbb{P}\big(\tilde{W}^{(k)} \leq x | \text{survival}\big) = \mathbb{E} \Big[ \sum_{j=0}^{k-1} \frac{1}{j!} \Big( \frac{m \mathrm{e}^{-m \beta x} M_\infty}{m-1}  \Big)^j \exp \Big\{ - \frac{m \mathrm{e}^{-m \beta x} M_\infty}{m-1}  \Big\}\Big\vert M_\infty>0 \Big]\]
\end{Corollary}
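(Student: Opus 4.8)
The plan is to deduce Corollary \ref{L_distribution} directly from Theorem \ref{N_distribution} via the standard duality between order statistics and level-counting functions, followed by a routine check that the candidate limit laws are continuous distribution functions. The first step is to record the elementary event identity relating the $k$th longest edge to the counting variable: for every $l \geq 0$ and integer $k \geq 1$,
\[
\big\{ L_t^{(k)} \geq l \big\} = \big\{ N_t^l \geq k \big\},
\]
since the $k$th longest pendant edge has length at least $l$ exactly when at least $k$ pendant edges have length $\geq l$. On survival this is eventually well-defined because $N_t \to \infty$ almost surely on survival, so the chance of there being fewer than $k$ pendant edges tends to $0$ and does not affect any limit (one may set $L_t^{(k)} := 0$ on that negligible event). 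Taking complements yields $\{ L_t^{(k)} < l \} = \{ N_t^l \leq k-1 \}$.

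Next I would fix $x \in \mathbb{R}$ and take $l = l(t) = \alpha^\ast t + x$ as in \eqref{alpha}. Since pendant-edge lengths are absolutely continuous, $\mathbb{P}\big(L_t^{(k)} = \alpha^\ast t + x\big) = 0$, so the $<$ and $\leq$ events coincide and
\[
\mathbb{P}\big( L_t^{(k)} - \alpha^\ast t \leq x \,\big|\, \text{survival} \big) = \mathbb{P}\big( N_t^l \leq k-1 \,\big|\, \text{survival} \big).
\]
Because $N_t^l$ and $V_x$ are $\mathbb{Z}_{\geq 0}$-valued, the weak convergence $N_t^l \Rightarrow V_x$ provided by Theorem \ref{N_distribution} is equivalent to convergence of the individual point masses, and in particular $\mathbb{P}(N_t^l \leq k-1 \mid \text{survival}) \to \mathbb{P}(V_x \leq k-1 \mid \text{survival})$ as $t \to \infty$. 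Summing \eqref{V_pendant} over $j = 0, \dots, k-1$ identifies this limit with precisely the stated expression for $\mathbb{P}(W^{(k)} \leq x \mid \text{survival})$.

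To turn this fixed-$x$ convergence into genuine convergence in distribution I would verify that $G(x) := \mathbb{P}(V_x \leq k-1 \mid \text{survival})$ is a continuous distribution function, so that every $x$ is a continuity point and the pointwise limit above suffices. Writing $\lambda(x) = \mathrm{e}^{-m\beta x} M_\infty$, the integrand $\sum_{j=0}^{k-1} \tfrac{1}{j!}\lambda^j \mathrm{e}^{-\lambda}$ is the $\mathrm{Poisson}(\lambda)$ distribution function evaluated at $k-1$, which is smooth and strictly decreasing in $\lambda$; since $\lambda(x)$ is continuous and decreasing in $x$ and, on $\{M_\infty > 0\}$, sweeps out $(0,\infty)$ as $x$ ranges over $\mathbb{R}$, dominated convergence shows that $G$ is continuous and nondecreasing with $G(-\infty) = 0$ and $G(+\infty) = 1$. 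Thus $G$ is the distribution function of $W^{(k)}$ and $L_t^{(k)} - \alpha^\ast t \Rightarrow W^{(k)}$. The interior-edge and all-edge statements then follow by the same argument verbatim, replacing $(N_t^l, V_x)$ by $(\hat{N}_t^l, \hat{V}_x)$ and $(\tilde{N}_t^l, \tilde{V}_x)$ and invoking \eqref{V_interior} and \eqref{V_all} in place of \eqref{V_pendant}.

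I do not expect a deep obstacle here, as all the probabilistic content already resides in Theorem \ref{N_distribution}; the only points needing care are the two that make the inversion legitimate, namely the continuity of the limit distribution function $G$ (which guarantees that fixed-$x$ weak convergence of the counting variables lifts to weak convergence of the order statistics) and the absence of atoms in the edge lengths (which renders the $<$ versus $\leq$ distinction immaterial).
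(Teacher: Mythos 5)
Your proposal is correct and follows essentially the same route as the paper: invert the identity $\{L_t^{(k)} \geq l\} = \{N_t^l \geq k\}$ with $l = \alpha^\ast t + x$, note the absence of atoms so that strict and non-strict inequalities agree, and apply Theorem \ref{N_distribution}. The only difference is that you additionally verify the limit $G(x) = \mathbb{P}(V_x \leq k-1 \mid \text{survival})$ is a continuous distribution function — a worthwhile detail the paper leaves implicit, but not a change of method.
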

In particular, the limiting distributions of the longest pendant, interior and any edge centred around $\alpha^\ast t$ are mixtures of Gumbel distributions:
\[
\mathbb{P}\big(W^{(1)} \leq x | \text{survival}\big) = \mathbb{E} \Big[ \exp \big\{ \!-\! \mathrm{e}^{-m \beta x} M_\infty \big\}\Big\vert M_\infty>0 \Big]
\]
(and similarly for $\hat{W}^{(1)}$ and $\tilde{W}^{(1)}$).
\begin{proof}[Proof of Corollary \ref{L_distribution}]
We note that 
\[
\mathbb{P}\big(L_t^{(k)} > \alpha^\ast t + x \big\vert \text{survival}\big) = \mathbb{P}\big(L_t^{(k)} \geq \alpha^\ast t + x \big\vert \text{survival}\big) = 
\mathbb{P}\big(N_t^l \geq k \big\vert \text{survival} \big), 
\]
where $l = \alpha^\ast t + x$. Hence, combining this observation with Theorem \ref{N_distribution},
\[
\mathbb{P}(L_t^{(k)} - \alpha^\ast t \leq x | \text{survival}) = \mathbb{P} \big( N_t^l < k \big\vert \text{survival} \big) \to \mathbb{P} \big( V_x  < k \big\vert \text{survival} \big)
\]
as $t \to \infty$. The proofs for $\hat{L}_t^{(k)}$ and $\tilde{L}_t^{(k)}$ are essentially the same.
\end{proof}
\begin{Remark}
Let us note that conditioning on the process surviving forever may be easily replaced with conditioning on survival up to time $t$ whenever it would make sense since $\mathbb{P}(N_t>0) \searrow \mathbb{P}($survival$)$ as $t \to \infty$.  For example, $\mathbb{P}(V_x=k | $survival$)$ in Theorem \ref{N_distribution} may be replaced with $\lim_{t \to \infty} \mathbb{P}(N_t^l = k | N_t>0)$ and likewise for interior and all edges. Similarly, $\mathbb{P}(W^{(k)} \leq x |$ survival $)$ in Corollary \ref{L_distribution} may be replaced with $\lim_{t \to \infty} \mathbb{P}(L_t^{(k)}-\alpha^\ast t \leq x | N_t > 0)$ and likewise for interior and all edges. 

It is also easy to see (after dividing \eqref{L_distribution_eq} by $t$ and using some standard results about convergence of random variables) that $\frac{L_t^{(k)}}{t} \to \alpha^\ast$ in $\mathbb{P}(\cdot | $survival$)$-probability. Or, for the same reason as above, $\lim_{t \to \infty} \mathbb{P}\big( \big\vert \frac{L_t^{(k)}}{t} - \alpha^\ast \big\vert > \epsilon \big\vert N_t > 0 \big) = 0$ for all $\epsilon > 0$ and likewise for interior and pendant edges. In fact, a stronger result is true.
\end{Remark}
\begin{Theorem}
\label{a_s}
For $k = 1$, $2$, $3$, ...,  
\begin{equation}
\label{a_s_limit}
\frac{L_t^{(k)}}{t} \to \alpha^\ast, \quad \frac{\hat{L}_t^{(k)}}{t} \to \alpha^\ast \ \text{ and } \frac{\tilde{L}_t^{(k)}}{t} \to \alpha^\ast
\end{equation}
as $t \to \infty$ a.s. on survival.
\end{Theorem}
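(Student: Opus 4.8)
The plan is to prove the two one-sided bounds $\limsup_{t\to\infty}\tilde L_t^{(1)}/t\le\alpha^\ast$ and $\liminf_{t\to\infty}L_t^{(k)}/t\ge\alpha^\ast$, $\liminf_{t\to\infty}\hat L_t^{(k)}/t\ge\alpha^\ast$ (a.s.\ on survival), from which all nine statements in \eqref{a_s_limit} follow at once: the longest edge of all, $\tilde L_t^{(1)}$, dominates every $k$th-longest quantity, so the $\limsup$ gives all upper bounds, while $\tilde L_t^{(k)}\ge L_t^{(k)}$ propagates the pendant lower bound to all edges. The structural fact I would exploit throughout is that $\tilde L_t^{(k)}$ is \emph{non-decreasing} in $t$ for each fixed $k$: as $t$ grows, every pendant edge either lengthens or freezes into a (fixed-length) interior edge, every interior edge is already frozen, and births only add elements, so there is a value-increasing injection from the edge-length multiset at time $t$ into that at any later time, whence the $k$th largest never decreases. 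This monotonicity is exactly what lets me pass from a discrete sequence of times to the continuum.

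For the upper bound only a first-moment estimate is needed. A pendant edge of length $\ge l$ at time $t$ is precisely a particle alive at $t-l$ that survives without branching up to $t$, giving $\E[N_t^l]=\E[N_{t-l}]\,\mathrm{e}^{-\beta l}=\mathrm{e}^{(m-1)\beta t-m\beta l}$; integrating over birth-then-death times yields $\E[\hat N_t^l]\le\frac{m}{m-1}\mathrm{e}^{(m-1)\beta t-m\beta l}$, so $\E[\tilde N_t^l]\le C\,\mathrm{e}^{(m-1)\beta t-m\beta l}$. Taking $l=(\alpha^\ast+\eps)t$ reduces the exponent to $-m\beta\eps t$, so $\sum_n\Prob(\tilde N_n^{(\alpha^\ast+\eps)n}\ge 1)\le\sum_n C\,\mathrm{e}^{-m\beta\eps n}<\infty$, and Borel--Cantelli gives $\tilde L_n^{(1)}\le(\alpha^\ast+\eps)n$ for all large integers $n$, a.s. Interpolating over each $[n,n+1]$ via the monotonicity of $\tilde L^{(1)}$ and letting $\eps\downarrow 0$ along a sequence yields $\limsup_t\tilde L_t^{(1)}/t\le\alpha^\ast$ a.s.

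The lower bound is the harder direction and needs a genuine concentration input. Fixing $\eps>0$, I would examine the population at the earlier time $s_n:=(\tfrac1m+\tfrac\eps2)n$, where on survival $N_{s_n}\sim M_\infty\,\mathrm{e}^{(m-1)\beta s_n}$ with $M_\infty>0$ — this is exactly where \eqref{xlogx} and Kesten--Stigum enter. Conditionally on the tree up to $s_n$ the particles alive then evolve independently, so the number of them surviving without branching all the way to $n+1$ is binomial with conditional mean $N_{s_n}\mathrm{e}^{-\beta(n+1-s_n)}\sim M_\infty\,\mathrm{e}^{m\beta\eps n/2}\to\infty$. Any such survivor is, for \emph{every} $t\in[n,n+1]$, a pendant edge of age $\ge n-s_n=(\alpha^\ast-\tfrac\eps2)n\ge(\alpha^\ast-\eps)t$ for $n$ large; this ``survive the whole interval'' device is what tames the continuum of times for the non-monotone pendant quantity. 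A Chernoff bound makes the conditional probability of fewer than $k$ survivors at most $\mathrm{e}^{-cM_\infty\mathrm{e}^{m\beta\eps n/2}}$, which is a.s.\ summable on survival, so the conditional Borel--Cantelli lemma gives $\ge k$ long pendant edges for all $t$ in all large $[n,n+1]$, i.e.\ $\liminf_t L_t^{(k)}/t\ge\alpha^\ast-\eps$. For interior edges I would run the same scheme but require each particle alive at $s_n$ to survive without branching for duration $d_n:=(\alpha^\ast-\eps)n$ and then branch before time $n$ (feasible since $s_n+d_n=(1-\tfrac\eps2)n<n$); this creates a completed interior edge of length $\ge d_n$ present at every $t\ge n$, with expected number growing like $M_\infty\,\mathrm{e}^{\beta\eps(m+1)n/2}$, and the identical Borel--Cantelli step finishes it. Sending $\eps\downarrow 0$ completes all the lower bounds.

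I expect the principal obstacle to be the lower bound's passage from discrete to continuous time combined with the conditional Borel--Cantelli step: the construction must be arranged so that a single favourable event at scale $n$ (surviving an entire sampling interval for pendant edges; freezing a long interior edge for interior edges) certifies long edges for \emph{all} $t\in[n,n+1]$, and one must check that the \emph{random} conditional mean $N_{s_n}p_n$ diverges fast enough, a.s.\ on survival, for the bad probabilities to be summable — which is precisely where $\Prob(M_\infty>0\mid\text{survival})=1$, hence assumption \eqref{xlogx}, is indispensable. The first-moment computations themselves are routine and are essentially those already underlying Theorem \ref{N_distribution}.
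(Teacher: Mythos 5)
Your proposal is correct in substance but takes a genuinely different route for the harder (liminf) half. For the limsup you and the paper do essentially the same thing: a first-moment bound on the number of long edges, Borel--Cantelli along integer times, and monotonicity to interpolate (the paper additionally records the Lipschitz bound $L_t^{(k)}\le L_s^{(k)}+(t-s)$ for pendant edges, which you sidestep by working with $\tilde L^{(1)}_t$ alone and using $\tilde L^{(1)}_t\ge L_t^{(k)},\hat L_t^{(k)},\tilde L_t^{(k)}$ --- a legitimate economy). For the liminf the paper uses no concentration inequality at all: it fixes $\alpha<\alpha^\ast$, sets $\delta=1-\alpha/\alpha^\ast$, decomposes the tree at time $\delta n$, and notes that each of the $N_{\delta n}$ independent subtrees fails to contain $k$ pendant edges of length $\ge\alpha n=\alpha^\ast(1-\delta)n$ with probability tending to $\mathbb{P}(V_0<k)<1$ by Theorem \ref{N_distribution}; on $\{N_{\delta n}>n\}$ this yields a summable bound $q_n^n$, and $\mathbb{P}(N_{\delta n}>n \text{ ev.}\mid\text{survival})=1$ finishes the argument. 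You instead examine the particles alive at $s_n=(1/m+\eps/2)n$ and apply a Chernoff bound to the binomial count of those that survive unbranched through $[n,n+1]$ (or freeze into a long interior edge before time $n$); the per-particle success probability tends to $0$ but the population grows strictly faster, so the conditional failure probability is doubly exponentially small. Your route is self-contained (it does not invoke the distributional limit $V_0$) at the price of a concentration estimate; the paper's route recycles Theorem \ref{N_distribution} and only needs that an event of asymptotically constant probability fails in more than $n$ independent trials.

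One step you should make precise: the ``conditional Borel--Cantelli'' invocation. The $\sigma$-fields $\mathcal{F}_{s_n}$ on which you condition do not form a filtration to which the bad events $A_n$ are adapted (each $A_n$ depends on the tree up to time $n+1>s_{n+1}$), so L\'evy's extension of the Borel--Cantelli lemma does not apply verbatim, and ``$\sum_n\mathbb{P}(A_n\mid\mathcal{F}_{s_n})<\infty$ a.s.'' does not by itself yield $\mathbb{P}(A_n\text{ i.o.})=0$. The standard repair --- which is exactly the device the paper uses with $\{N_{\delta n}>n\}$ --- is to introduce a deterministic threshold such as $G_n=\{N_{s_n}\ge \mathrm{e}^{\beta(\alpha^\ast+\eps/4)n}\}$, observe that $G_n$ holds eventually a.s.\ on survival by Kesten--Stigum, and apply the ordinary first Borel--Cantelli lemma to $A_n\cap G_n$, whose unconditional probabilities are bounded by $\exp\{-c\,\mathrm{e}^{\beta\eps n/4}\}$. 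With that adjustment your argument goes through.
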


\begin{subsubsection}{Special case: Long edges in the Birth-Death process}
We can specialise the above  results about long edges in general Galton-Watson results to the Birth-Death process. The Birth-Death process is not only an important model for many applications, but it also permits further explicit calculations and simplified expressions. 
Previously, only the behaviour of long pendant edges for Birth-Death processes had been established in \cite{BHMKS22}. 
\begin{Example}
Consider a supercritical birth-death process with birth rate $\lambda$ and death rate $\mu$ ($\lambda > \mu$). This is a special case of a Galton-Watson process with branching rate $\beta = \lambda + \mu$ and the offspring distribution $p_0 = \frac{\mu}{\lambda + \mu}$, $p_2 = \frac{\lambda}{\lambda+\mu}$. 

In this case $m = \frac{2 \lambda}{\lambda + \mu}$ and so
\[
\alpha^\ast = 1 - \frac{\lambda + \mu}{2 \lambda} = \frac{\lambda - \mu}{2 \lambda} = \frac{1}{2} \big( 1 - \frac{\mu}{\lambda} \big).
\]
Moreover, in this case $M_\infty$ has $Exp(1 - \frac{\mu}{\lambda})$ distribution conditional on survival (see e.g. \cite{H64}).

One can then easily check that $V_x$, $\hat{V}_x$, and $\tilde{V}_x$ from Theorem \ref{N_distribution} follow Geometric distribution on $\{0, 1, 2, \cdots\}$ with parameters $\frac{1}{1+ \frac{\lambda}{\lambda - \mu} \mathrm{e}^{-m \beta x}}$, $\frac{1}{1+ \frac{\lambda}{\lambda - \mu} \frac{\mathrm{e}^{-m \beta x}}{m-1}}$, and $\frac{1}{1+ \frac{\lambda}{\lambda - \mu} \frac{m \mathrm{e}^{-m \beta x}}{m-1}}$, respectively.

In addition, the cumulative distribution functions of $W^{(k)}$, $\hat{W}^{(k)}$ and $\tilde{W}^{(k)}$ from Corollary \ref{L_distribution} become:
\[
\mathbb{P}\big(W^{(k)} \leq x | \text{survival}\big) = 1 - \frac{1}{\big( 1 + \frac{\lambda - \mu}{\lambda} \mathrm{e}^{m \beta x} \big)^k},
\]
\[
\mathbb{P}\big(\hat{W}^{(k)} \leq x | \text{survival}\big) = 1 - \frac{1}{\big( 1 + \frac{\lambda - \mu}{\lambda} (m-1) \mathrm{e}^{m \beta x} \big)^k}
\]
and
\[
\mathbb{P}\big(\tilde{W}^{(k)} \leq x | \text{survival}\big) = 1 - \frac{1}{\big( 1 + \frac{\lambda - \mu}{\lambda} \frac{m-1}{m} \mathrm{e}^{m \beta x} \big)^k}.
\]
In particular, $W^{(1)}$, $\hat{W}^{(1)}$ and $\tilde{W}^{(1)}$ all follow Logistic distributions with the same scale parameter $\frac{1}{m \beta}$ but different location parameters $\frac{1}{m \beta} \log (\frac{\lambda}{\lambda - \mu})$, $\frac{1}{m \beta} \log (\frac{1}{m-1} \cdot \frac{\lambda}{\lambda - \mu})$, and $\frac{1}{m \beta} \log (\frac{m}{m-1} \cdot \frac{\lambda}{\lambda - \mu})$, respectively. 
\end{Example}
\end{subsubsection}
\begin{subsubsection}{Structure of this article} 
The rest of the article is organised as follows. A simple proof of Theorem \ref{N_distribution} for pendant edges will be presented in Section 2. The proof of Theorem \ref{N_distribution} for interior and all edges will be given in Section 3 with all the necessary heavy calculations presented in Section 4 separately. Finally, Theorem \ref{a_s} will be proved in Section 5. 
\end{subsubsection}
\section{Proof of Theorem \ref{N_distribution} for pendant edges.}
Recall that $N_t^l$ is the number of pendant edges of length $\geq l$ at time $t$. 
\begin{proof}[Proof of Theorem \ref{N_distribution} for pendant edges]
We observe that for $t>l$
\begin{equation}
\label{N_t_l}
N_t^l = \sum_{k=1}^{N_{t-l}} \mathbf{1}_{A_k},
\end{equation}
where 
\[
A_k = \big\{ k\text{th particle alive at time } t-l \text{ does not die in the time interval } [t-l, t] \big\}.
\]
Events $A_k$, $k \in N_{t-l}$ are independent under $\mathbb{P} (\cdot | \mathcal{F}_{t-l})$ with 
\[
\mathbb{P}(A_k | \mathcal{F}_{t-l}) = \mathrm{e}^{- \beta l}.
\]
We define $m_t^l := \mathbb{E}N_t^l$ and $v_t^l = \mathbb{E} \big( N_t^l \big)^2$. If we then recall the facts (see e.g. \cite{H64}) that 
\begin{equation}
\label{m_t}
\mathbb{E} N_\tau = \mathrm{e}^{\beta (m-1) \tau}
\end{equation}
and, assuming $v<\infty$, 
\begin{equation}
\label{v_t}
\mathbb{E} \big[ N_\tau(N_\tau - 1) \big] = \frac{v-m}{m-1} \Big( \mathrm{e}^{2\beta (m-1) \tau} - \mathrm{e}^{\beta (m-1) \tau} \Big)
\end{equation}
for any $\tau \geq 0$ then it follows from \eqref{N_t_l} that
\begin{equation}
\label{m_t_l}
m_t^l = \mathrm{e}^{-\beta l} \mathbb{E} N_{t-l} = \mathrm{e}^{-\beta l} \cdot \mathrm{e}^{\beta (m-1)(t-l)} = \mathrm{e}^{-\beta(ml - (m-1)t)}
\end{equation}
and, assuming $v<\infty$,  
\begin{align}
\label{v_t_l}
v_t^l = &\mathbb{E} \Big[ \sum_{k=1}^{N_{t-l}} \mathbf{1}_{A_k} + \sum_{\substack{i,j = 1\\i \neq j}}^{N_{t-l}} \mathbf{1}_{A_i} \mathbf{1}_{A_j} \Big]\\
= &\mathrm{e}^{-\beta l} \mathbb{E} N_{t-l} + \mathrm{e}^{-2 \beta l} \mathbb{E} \big[ N_{t-l} (N_{t-l} - 1) \big] \nonumber \\
= &\mathrm{e}^{-\beta l} \cdot \mathrm{e}^{\beta (m-1)(t-l)} + \frac{v-m}{m-1} \mathrm{e}^{-2 \beta l} \Big( 
\mathrm{e}^{2 \beta (m-1)(t-l)} - \mathrm{e}^{\beta (m-1)(t-l)}\Big)\nonumber\\
= &\mathrm{e}^{-\beta(ml - (m-1)t)} \Big[ 1 + \frac{v-m}{m-1} \mathrm{e}^{-\beta(ml - (m-1)t)}  - \frac{v-m}{m-1} \mathrm{e}^{- \beta l} \Big]
\end{align}
Moreover, the probability generating function of $N_t^l$ satisfies
\[
\mathbb{E} \Big[ \theta^{N_t^l} \Big] = \mathbb{E} \Big[ \theta^{\sum_{k=1}^{N_{t-l}} \mathbf{1}_{A_k}} \Big] = 
\mathbb{E} \Big[ \prod_{k=1}^{N_{t-l}} \theta^{\mathbf{1}_{A_k}} \Big] = \mathbb{E} \Big[ \Big( \theta \mathbb{P}(A_k) + 1 - \mathbb{P}(A_k) \Big)^{N_{t-l}} \Big] = \mathbb{E} \Big[ \Big( 1+(\theta-1) \mathrm{e}^{-\beta l} \Big)^{N_{t-l}} \Big],
\]
$\theta \in (-1,1)$. If we now take 
\[
l = l(t) = \alpha^\ast t + x = \big( 1 - \frac{1}{m} \big)t + x
\]
for some fixed $x$ and a variable $t$ then 
\begin{align*}
\mathbb{E} \Big[ \Big( 1 + (\theta-1) \mathrm{e}^{-\beta l} \Big)^{N_{t-l}} \Big]
= &\mathbb{E} \Big[ \Big( \big( 1 + (\theta-1) \mathrm{e}^{-\beta l}\big)^{\mathrm{e}^{\beta(m-1)(t-l)}} \Big)^{\mathrm{e}^{-\beta(m-1)(t-l)} N_{t-l}} \Big]\\
= &\mathbb{E} \Big[ \Big( \big( 1 + (\theta-1) \mathrm{e}^{-\beta l}\big)^{\mathrm{e}^{\beta(m-1)(t-l)}} \Big)^{M_{t-l}} \Big]\\
= &\mathbb{E} \Big[ \Big( \big( 1 + (\theta-1) \mathrm{e}^{-m \beta x} \mathrm{e}^{-\beta(m-1)(t-l)}\big)^{\mathrm{e}^{\beta(m-1)(t-l)}} \Big)^{M_{t-l}} \Big]\\
\to &\mathbb{E} \Big[ \exp \big\{ (\theta-1) \mathrm{e}^{-m \beta x} M_\infty \big\} \Big]
\end{align*}
as $t \to \infty$ using the facts that $\mathrm{e}^{\beta (m-1)(t-l)} \to \infty$ as $t \to \infty$, $(1 + \frac{c}{\tau})^\tau \to \mathrm{e}^c$ as $\tau \to \infty$, $M_{t-l} \to M_\infty$ as $t \to \infty$ a.s. on survival and bounded convergence theorem. It also follows that
\begin{align*}
\mathbb{E} \big[ \theta^{N_t^l} \big\vert \text{survival} \big] = &\frac{\mathbb{E} \big[ \theta^{N_t^l} \big] - \mathbb{E} \big[ \theta^{N_t^l} \mathbf{1}_{\{ extinction \}}\big]}{\mathbb{P}(survival)}\\
\to &\frac{\mathbb{E} \big[ \exp \{ (\theta-1) \mathrm{e}^{-m \beta x} M_\infty \} \big] - \mathbb{P}(extinction)}{\mathbb{P}(survival)}\\
= &\frac{\mathbb{E} \big[ \exp \{ (\theta-1) \mathrm{e}^{-m \beta x} M_\infty \} \big] - \mathbb{P}(M_\infty = 0)}{\mathbb{P}(M_\infty > 0)}\\
= &\mathbb{E} \Big[ \exp \big\{ (\theta-1) \mathrm{e}^{-m \beta x} M_\infty \big\} \Big\vert M_\infty > 0 \Big]
\end{align*}
recalling that events $\{$extinction$\}$ and $\{M_\infty > 0\}$ are almost surely equal. This establishes \eqref{N_distribution_eq} for $N_t^l$ since $\exp \big\{ (\theta-1) \mathrm{e}^{-m \beta x} M_\infty \big\}$ is the probability generating function of a Poisson random variable with parameter $\mathrm{e}^{- m \beta x} M_\infty$.
\end{proof}
\section{Proof of Theorem \ref{N_distribution} for interior and all edges.}
Recall that $\hat{N}_t^l$ and $\tilde{N}_t^l$ are the numbers of interior and all edges respectively of length $\geq l$ in the tree $\mathcal{T}_t$. \begin{Proposition}
\label{m_hat_tilde}
Let $\hat{m}_t^l = \mathbb{E} \hat{N}_t^l$ and $\tilde{m}_t^l = \mathbb{E} \tilde{N}_t^l$. Then
\begin{equation}
\label{m_hat}
\hat{m}_t^l =  \left\{
\begin{array}{rl}
0 & \text{if } t \leq l\\
\frac{1}{m-1} \mathrm{e}^{-\beta l} \big( \mathrm{e}^{\beta (m-1)(t- l)} - 1\big) & \text{if } t >  l
\end{array} \right.
\end{equation}
and
\begin{equation}
\label{m_tilde}
\tilde{m}_t^l =  \left\{
\begin{array}{rl}
0 & \text{if } t \leq l\\
\frac{1}{m-1} \mathrm{e}^{-\beta l} \big( m \mathrm{e}^{\beta (m-1)(t- l)} - 1\big) & \text{if } t >  l
\end{array} \right.
\end{equation}
\end{Proposition}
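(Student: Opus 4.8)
The plan is to reduce the whole proposition to a single first-moment computation for interior edges. The key observation is that every edge is either pendant or interior, so $\tilde{N}_t^l = N_t^l + \hat{N}_t^l$ and hence $\tilde{m}_t^l = m_t^l + \hat{m}_t^l$. Since $m_t^l = \mathrm{e}^{-\beta l}\mathrm{e}^{\beta(m-1)(t-l)}$ has already been computed in \eqref{m_t_l}, once $\hat{m}_t^l$ is known the formula \eqref{m_tilde} for $\tilde{m}_t^l$ will follow by elementary algebra: one checks that $m_t^l + \frac{1}{m-1}\mathrm{e}^{-\beta l}(\mathrm{e}^{\beta(m-1)(t-l)} - 1) = \frac{1}{m-1}\mathrm{e}^{-\beta l}(m\,\mathrm{e}^{\beta(m-1)(t-l)} - 1)$. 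So the entire proposition rests on establishing \eqref{m_hat}, and this requires only $m<\infty$ (no second-moment assumption).

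To compute $\hat{m}_t^l = \mathbb{E}\hat{N}_t^l$, I would identify each interior edge with the particle whose lifespan it represents: such a particle is born at some time $b$, lives for an $\mathrm{Exp}(\beta)$ time $\ell$ that is independent of its birth time and of the past, and its edge is interior of length $\geq l$ in $\mathcal{T}_t$ precisely when $\ell \geq l$ and the death time $b + \ell \leq t$, i.e. $l \leq \ell \leq t - b$. For $t \leq l$ this is impossible, immediately giving $\hat{m}_t^l = 0$; so assume $t > l$. Conditioning on the birth time, the chance that a given particle born at $b$ contributes is $\mathbb{P}(l \leq \ell \leq t - b) = \mathrm{e}^{-\beta l} - \mathrm{e}^{-\beta(t-b)}$ when $b \leq t-l$, and $0$ otherwise.

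The main step is then to describe the first-moment measure of the birth times. Writing $\mu(\mathrm{d}b)$ for the expected number of particles born in $[b, b+\mathrm{d}b)$, I claim $\mu(\mathrm{d}b) = \delta_0(\mathrm{d}b) + m\beta\,\mathrm{e}^{\beta(m-1)b}\,\mathrm{d}b$: the atom at $0$ is the initial particle, while for $b>0$ new particles appear at death events, which occur at expected rate $\beta\,\mathbb{E}N_b = \beta\,\mathrm{e}^{\beta(m-1)b}$ by \eqref{m_t}, each producing on average $m$ offspring. As a consistency check, integrating $\mathrm{e}^{-\beta(s-b)}$ against $\mu$ over $[0,s]$ recovers $\mathbb{E}N_s = \mathrm{e}^{\beta(m-1)s}$. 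Because each particle's lifetime is independent of its birth time, a Campbell/tower-property argument then gives
\[
\hat{m}_t^l = \left(\mathrm{e}^{-\beta l} - \mathrm{e}^{-\beta t}\right) + \int_0^{t-l} m\beta\,\mathrm{e}^{\beta(m-1)b}\left(\mathrm{e}^{-\beta l} - \mathrm{e}^{-\beta(t-b)}\right)\mathrm{d}b,
\]
the first term coming from the atom at $0$.

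Finally I would evaluate the two elementary integrals $\int_0^{t-l} \mathrm{e}^{\beta(m-1)b}\,\mathrm{d}b$ and $\int_0^{t-l}\mathrm{e}^{m\beta b}\,\mathrm{d}b$ and simplify. The $\mathrm{e}^{-\beta t}$ contributions cancel and, after factoring out $\mathrm{e}^{-\beta l}$ and using $1 - \frac{m}{m-1} = -\frac{1}{m-1}$ together with $\frac{m}{m-1} - 1 = \frac{1}{m-1}$, one is left with exactly $\frac{1}{m-1}\mathrm{e}^{-\beta l}(\mathrm{e}^{\beta(m-1)(t-l)} - 1)$, which is \eqref{m_hat}. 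I expect the only genuinely delicate point to be the rigorous justification of the birth intensity $\mu$ and of the interchange of expectation with the sum over particles; everything else is bookkeeping. This can be made rigorous either through the many-to-one formula for branching processes or, more elementarily, through the flux/ODE balance underlying $\mathbb{E}N_s = \mathrm{e}^{\beta(m-1)s}$ (expected births occur at $m$ times the death rate $\beta\,\mathbb{E}N_s$), which pins down $\mu$ uniquely, while the interchange is justified by monotone convergence since all summands are nonnegative.
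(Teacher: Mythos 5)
Your proposal is correct, but the route to \eqref{m_hat} is genuinely different from the paper's. The paper conditions on the first branching event $T$ of the initial particle, writes $\hat{N}_t^l = \mathbf{1}_{\{l<T\leq t\}} + \mathbf{1}_{\{0\leq T\leq t\}}\sum_{k=1}^{\xi}\hat{N}_{t-T}^l(k)$, takes expectations to obtain a renewal-type integral equation for $\hat{m}_t^l$, converts it to the ODE $(\hat{m}_t^l)' = \beta(m-1)\hat{m}_t^l + \beta\mathrm{e}^{-\beta l}$ with $\hat{m}_{l+}^l=0$, and solves by integrating factor. You instead sum over all particles ever born, using the birth-time intensity $\mu(\mathrm{d}b)=\delta_0(\mathrm{d}b)+m\beta\,\mathrm{e}^{\beta(m-1)b}\,\mathrm{d}b$ and the independence of each lifetime from its birth time; your integral
\[
\hat{m}_t^l = \bigl(\mathrm{e}^{-\beta l}-\mathrm{e}^{-\beta t}\bigr) + \int_0^{t-l} m\beta\,\mathrm{e}^{\beta(m-1)b}\bigl(\mathrm{e}^{-\beta l}-\mathrm{e}^{-\beta(t-b)}\bigr)\,\mathrm{d}b
\]
does evaluate to $\frac{1}{m-1}\mathrm{e}^{-\beta l}\bigl(\mathrm{e}^{\beta(m-1)(t-l)}-1\bigr)$, and your handling of $\tilde{m}_t^l = m_t^l + \hat{m}_t^l$ is exactly the paper's. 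Your approach is arguably more transparent for a first moment (it makes visible where the mass of long interior edges comes from, namely births near time $t-l$), and the justification you flag — the many-to-one/Campbell identity and the birth intensity, the latter obtainable rigorously from the compensator of the death-counting process together with independence of offspring numbers from death times — is standard. The paper's ODE machinery is less illuminating here but has the advantage that the identical template carries over to the second-moment bound of Proposition 4.1, where a Campbell-type argument would require the second factorial moment measure of the birth process and would be considerably messier; this is presumably why the authors chose it.
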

\begin{proof}
Trivially $\hat{m}_t^l  =\tilde{m}_t^l  =0$ when $t \leq l$ because edges in $\mathcal{T}_t$ cannot be longer than $t$. For the rest of the proof let $l$ be fixed and let $t>l$. If $T$ is the branching time of the initial particle (so that $\mathbb{P}(T>t) = \mathrm{e}^{- \beta t}$) and $\xi$ is the number of children produced by the initial particle then
\begin{align}
\label{N}
\hat{N}_t^l = &0 \cdot \mathbf{1}_{\{T > t\}} + \Big[ 1 + \sum_{k=1}^\xi \hat{N}_{t-T}^l(k) \Big] \mathbf{1}_{\{l <T \leq t\}} + 
\Big[ 0 + \sum_{k=1}^\xi \hat{N}_{t-T}^l(k) \Big] \mathbf{1}_{\{0 \leq T \leq l\}} \nonumber\\
= &\mathbf{1}_{\{l <T \leq t\}} + \mathbf{1}_{\{0 \leq T \leq t\}} \sum_{k=1}^\xi \hat{N}_{t-T}^l(k), 
\end{align}
where $\hat{N_\tau^l}(k)$ is the number of interior edges of length $\geq l$ in the subtree initiated by $k$th child of the initial particle and which has evolved for a period of time $\tau$ after it has been initiated.

Taking the expectation of the above equation and using the Markov property gives
\begin{align*}
\hat{m}_t^l = &(\mathrm{e}^{- \beta l} - \mathrm{e}^{- \beta t}) + \mathbb{E} [\xi] \int_0^t \hat{m}_{t-s}^l \beta \mathrm{e}^{- \beta s} \mathrm{d}s\\
= &(\mathrm{e}^{- \beta l} - \mathrm{e}^{- \beta t}) + m \beta \mathrm{e}^{-\beta t} \int_0^t \hat{m}_u^l \mathrm{e}^{\beta u}  \mathrm{d}u.
\end{align*}
This translates to the differential equation
\begin{equation*}
\left\{
\begin{array}{rl}
(\hat{m}_t^l)' &= \beta(m-1) \hat{m}_t^l + \beta\mathrm{e}^{-\beta l} \quad ,\ t > l\\
\hat{m}_{l+}^l & = 0
\end{array} \right.
\end{equation*}
(Continuity and differentiability of $\hat{m}_t^l$ follow from the integral equation.) Multiplying the differential equation by the integrating factor $\mathrm{e}^{-\beta(m-1)t}$ gives
\[
(\hat{m}_t^l)' \mathrm{e}^{-\beta(m-1)t} - \beta (m-1) \mathrm{e}^{-\beta(m-1)t} \hat{m}_t^l = \beta \mathrm{e}^{- \beta l} \mathrm{e}^{-\beta(m-1)t}
\]
Integrating this equation over the time interval $[l,t]$ and noting that the left hand side can be written as $\frac{\mathrm{d}}{\mathrm{d}t} \big( \hat{m}_t^l \mathrm{e}^{-\beta (m-1)t} \big)$ we get 
\[
\Big[ \hat{m}_s^l \mathrm{e}^{-\beta (m-1)s} \Big]_l^t = \frac{\mathrm{e}^{-\beta l}}{m-1} \int_l^t \beta (m-1) \mathrm{e}^{-\beta (m-1)s} \mathrm{d}s
\]
and hence using the initial condition $\hat{m}_{l+}^l = 0$ we get
\[
\hat{m}_t^l \mathrm{e}^{-\beta (m-1)t} = \frac{\mathrm{e}^{-\beta l}}{m-1} \Big( \mathrm{e}^{-\beta (m-1)l} - \mathrm{e}^{-\beta (m-1)t} \Big) 
\]
which yields
\[
\hat{m}_t^l = \frac{1}{m-1} \mathrm{e}^{-\beta l} \big( \mathrm{e}^{\beta (m-1)(t- l)} - 1\big)
\]
for $t > l$. Then using the fact that $\tilde{N}^l_t = N^l_t + \hat{N}^l_t$ and recalling the expression for $m_t^l = \mathbb{E}N_t^l$ from \eqref{m_t_l} we get
\[
\tilde{m}_t^l = m_t^l + \hat{m}_t^l = \frac{1}{m-1} \mathrm{e}^{-\beta l} \big( m \mathrm{e}^{\beta (m-1)(t- l)} - 1\big)
\]
for $t > l$.
\end{proof}
One may now see that if $l=l(t)$ is taken to depend on $t$ in any way such that 
\begin{equation}
\label{condition1}
t-l \to \infty 
\end{equation}
as $t \to \infty$ then
\[
\hat{m}_t^l \sim \frac{1}{m-1} \mathrm{e}^{-\beta l} \cdot \mathrm{e}^{\beta(m-1)(t-l)} 
= \frac{1}{m-1} \mathrm{e}^{-\beta (ml - (m-1)t)}
\]
and
\[
\tilde{m}_t^l \sim \frac{m}{m-1} \mathrm{e}^{-\beta l} \cdot \mathrm{e}^{\beta(m-1)(t-l)} = 
\frac{m}{m-1} \mathrm{e}^{-\beta (ml - (m-1)t)}
\]
as $t \to \infty$. Thus, if in addition to \eqref{condition1} we assume that $l=l(t)$ is such that
\begin{equation}
\label{condition2}
ml - (m-1)t \to \infty \quad \text{ as } t \to \infty 
\end{equation}
(for example, if we take $l = \alpha t$, $\alpha \in (\alpha^\ast, 1)$) then $\hat{m}_t^l$ and $\tilde{m}_t^l$ will converge to $0$, whereas if we assume that
\begin{equation}
\label{condition3}
ml - (m-1)t \to - \infty \quad \text{ as } t \to \infty 
\end{equation}
(for example, if we take $l = \alpha t$, $\alpha \in (0, \alpha^\ast$) then $\hat{m}_t^l$ and $\tilde{m}_t^l$ will diverge to $\infty$.

In fact, under conditions \eqref{condition1} and \eqref{condition2} on $l = l(t)$, the decay rates of $\hat{m}_t^l$ and $\tilde{m}_t^l$ will give the decay rates of $\mathbb{P}( \hat{N}_t^l > 0 )$ and $\mathbb{P}( \tilde{N}_t^l > 0 )$ as is stated in the following proposition.
\begin{Proposition}
\label{p_hat_tilde}
Let $l=l(t)$ depend on $t$ in any way such that $t-l \to \infty$ and $ml - (m-1)t \to \infty$ as $t \to \infty$ (e.g., one may have $l = \alpha t + x$ for some $\alpha \in (\alpha^\ast, 1)$ and $x \in \mathbb{R}$). Then 
\begin{equation}
\label{p_hat_limit}
\mathbb{P} \big( \hat{N}_t^l > 0 \big) \sim \hat{m}_t^l \sim \frac{1}{m-1} \mathrm{e}^{-\beta (ml - (m-1)t)} = 
\frac{1}{m-1} \mathrm{e}^{-\beta m(l - \alpha^\ast t)} 
\end{equation}
and
\begin{equation}
\label{p_tilde_limit}
\mathbb{P} \big( \tilde{N}_t^l > 0 \big) \sim \tilde{m}_t^l \sim \frac{m}{m-1} \mathrm{e}^{-\beta (ml - (m-1)t)} = \frac{m}{m-1} \mathrm{e}^{-\beta m(l - \alpha^\ast t)}  
\end{equation}
as $t \to \infty$. 
\end{Proposition}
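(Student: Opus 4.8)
The plan is to sandwich $\mathbb{P}(\hat N_t^l>0)$ between a first-moment upper bound and a second-moment (Paley--Zygmund) lower bound, arranging that both match $\hat m_t^l$ to leading order. Since $\hat N_t^l$ is a non-negative integer-valued random variable, Markov's inequality immediately gives $\mathbb{P}(\hat N_t^l>0)\le\mathbb{E}\hat N_t^l=\hat m_t^l$, and Proposition \ref{m_hat_tilde} together with conditions \eqref{condition1}--\eqref{condition2} already supplies the stated asymptotic for $\hat m_t^l$. For the matching lower bound I would use the Paley--Zygmund inequality in the form $\mathbb{P}(\hat N_t^l>0)\ge(\mathbb{E}\hat N_t^l)^2/\mathbb{E}[(\hat N_t^l)^2]$, which reduces the whole problem to showing that the second factorial moment $\hat v_t^l:=\mathbb{E}[\hat N_t^l(\hat N_t^l-1)]$ is negligible relative to the first moment, i.e.\ $\hat v_t^l=o(\hat m_t^l)$; then $\mathbb{E}[(\hat N_t^l)^2]=\hat m_t^l+\hat v_t^l\sim\hat m_t^l$ and the two bounds pinch. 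The statement for $\tilde N_t^l$ is entirely analogous, working with $\tilde v_t^l$ and $\tilde m_t^l$.

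To estimate $\hat v_t^l$ I would condition on the first branch time $T$ and family size $\xi$, exactly as in the first-moment computation \eqref{N}, writing the root contribution as an indicator and the subtree contributions as conditionally i.i.d.\ copies. Keeping $l$ fixed (as in the proof of Proposition \ref{m_hat_tilde}) one is led to a linear inhomogeneous equation whose forcing has two pieces: a \emph{nested-pair} term proportional to $\hat m_{t-l}^l$ and a \emph{sibling-pair} term proportional to $(v-m)(\hat m_{t-s}^l)^2$. Converting to the ODE
\[
(\hat v_t^l)'=\beta(m-1)\hat v_t^l+\beta(v-m)(\hat m_t^l)^2+2m\beta\,\mathrm{e}^{-\beta l}\,\hat m_{t-l}^l,\qquad \hat v_{l+}^l=0,
\]
and solving with the integrating factor $\mathrm{e}^{-\beta(m-1)t}$ (the heavy bookkeeping deferred to Section 4), I expect each contribution, measured against $\hat m_t^l$, to carry the extra factor $\mathrm{e}^{-\beta(ml-(m-1)t)}$ (with, for the nested term, an additional polynomial-in-$t$ prefactor that is killed because \eqref{condition2} forces $l$ to grow linearly). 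Hence $\hat v_t^l/\hat m_t^l\to0$ whenever $v<\infty$, closing the argument in the finite-variance case.

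The genuine difficulty is that \eqref{xlogx} permits $v=\infty$, in which case the sibling-pair term shows that $\mathbb{E}[(\hat N_t^l)^2]$ is itself infinite and the Paley--Zygmund bound above is vacuous. I would therefore apply the second-moment method not to $\hat N_t^l$ but to a truncated count $\hat N_t^{l,K}$ obtained by capping every family size at level $K$ and discarding the surplus offspring together with their descendants. Under the natural coupling the truncated tree is a subtree of $\mathcal T_t$ in which lifetimes, and hence edge lengths and the interior/pendant classification, are preserved; so every long interior edge it contains is also a long interior edge of $\mathcal T_t$, giving $\hat N_t^{l,K}\le\hat N_t^l$ pointwise and $\mathbb{P}(\hat N_t^l>0)\ge\mathbb{P}(\hat N_t^{l,K}>0)$. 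The truncated offspring law has mean $m_K\uparrow m$ and finite second moment $v_K<\infty$, so the previous paragraph applies to $\hat N_t^{l,K}$ and yields $\mathbb{P}(\hat N_t^{l,K}>0)\gtrsim\hat m_t^{l,K}$.

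The main obstacle is the joint limit $K=K(t)\to\infty$. Lowering the mean from $m$ to $m_K$ shifts the exponential growth rate, so the truncated first moment loses a factor $\mathrm{e}^{-\beta(m-m_K)(t-l)}$ relative to $\hat m_t^l$; since $t-l\to\infty$ this factor vanishes for any \emph{fixed} $K$, and one can only recover $\hat m_t^{l,K}\sim\hat m_t^l$ by letting $K=K(t)$ grow fast enough that $(m-m_{K(t)})(t-l)\to0$. Here \eqref{xlogx} is exactly what is needed: it forces the truncated tail mean $m-m_K$ to decay (at rate $o(1/\log K)$), making $m-m_{K(t)}=o(1/(t-l))$ attainable. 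At the same time the finite-variance estimate contributes an error of order $v_K\,\mathrm{e}^{-\beta(ml-(m-1)t)}$, which demands $v_{K(t)}=o(\mathrm{e}^{\beta(ml-(m-1)t)})$, i.e.\ that $K(t)$ not grow too fast. Because $ml-(m-1)t\to\infty$, these two requirements leave a nonempty window for $\log K(t)$, and checking that one may genuinely choose $K(t)$ inside it --- controlling $m-m_K$ and $v_K$ \emph{simultaneously} through the single moment assumption \eqref{xlogx} --- is the crux of the argument. The statements for $\tilde N_t^l$ then follow by repeating this scheme with $\tilde m_t^l$ and its second factorial moment.
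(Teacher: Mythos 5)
Your proposal is correct and follows essentially the same route as the paper: the trivial first-moment upper bound, a Paley--Zygmund lower bound driven by a second-moment ODE obtained by conditioning on the first branching event, and, when $v=\infty$, truncation of the offspring law at a level $K(t)$ chosen in exactly the window you describe (the paper takes $K(t)=\lfloor \mathrm{e}^{c(ml-(m-1)t)}\rfloor$ with $c\in(0,\beta/2)$, controlling $m-m^K$ via $F(K)/\log K$ from \eqref{xlogx} and $v^K$ via $v^K\le K^2$). The only cosmetic difference is that you work with the second factorial moment while the paper bounds $\mathbb{E}[(\hat N_t^l)^2]$ directly; these are equivalent here.
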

The proof of Proposition \ref{p_hat_tilde}, due to its length, will be given separately in Section 4. Assuming validity of \eqref{p_hat_limit} and \eqref{p_tilde_limit} we shall now proceed with the proof of Theorem \ref{N_distribution} for interior and all edges. 
\begin{proof}[Proof of Theorem \ref{N_distribution} for interior and all edges]
Let us fix $x \in \mathbb{R}$. For a variable $t \geq 0$ we take
\[
l = l(t) = \alpha^\ast t + x.
\]
We need to show that $\hat{N}_t^l$ and $\tilde{N}_t^l$ converge in distribution to certain mixed Poisson distributions given by \eqref{V_interior} and \eqref{V_all}. We shall do this by proving that the probability generating functions  of $\hat{N}_t^l$ and $\tilde{N}_t^l$ satisfy 
\begin{equation}
\label{pgf_interior}
\mathbb{E} \big[ \theta^{\hat{N}_t^l} \big\vert \text{survival} \big] \to \mathbb{E} \Big[ \exp \big\{ (\theta-1) \frac{\mathrm{e}^{-m \beta x} M_\infty}{m-1} \big\} \Big\vert M_\infty > 0 \Big]
\end{equation}
and
\begin{equation}
\label{pgf_all}
\mathbb{E} \big[ \theta^{\tilde{N}_t^l} \big\vert \text{survival} \big] \to \mathbb{E} \Big[ \exp \big\{ (\theta-1) \frac{m\mathrm{e}^{-m \beta x} M_\infty}{m-1} \big\} \Big\vert M_\infty > 0 \Big]
\end{equation}
as $t \to \infty$ for all $\theta \in (-1, 1)$.

Let us take a small $\delta>0$ ($\delta < \frac{\alpha^\ast}{m}$ will suffice). Then for all $t$ sufficiently large, 
\begin{equation}
\label{N_hat_bounds}
\sum_{k=1}^{N_{\delta t}} \hat{N}_{(1-\delta)t}^l(k) \leq \hat{N}_t^l \leq \sum_{k=1}^{N_{\delta t}} \mathbf{1}_{B_k} + 
\sum_{k=1}^{N_{\delta t}} \hat{N}_{(1-\delta)t}^l(k) 
\end{equation}
and
\begin{equation}
\label{N_tilde_bounds}
\sum_{k=1}^{N_{\delta t}} \tilde{N}_{(1-\delta)t}^l(k) \leq \tilde{N}_t^l \leq \sum_{k=1}^{N_{\delta t}} \mathbf{1}_{B_k} + 
\sum_{k=1}^{N_{\delta t}} \tilde{N}_{(1-\delta)t}^l(k), 
\end{equation}
where for $k \in \{1, 2, \cdots, N_{\delta t}\}$, $\hat{N}_{(1-\delta)t}^l(k)$ and $\tilde{N}_{(1-\delta)t}^l(k)$ are the numbers of interior and all edges of length $\geq l$ in subtrees $\mathcal{T}_{(1-\delta)t}(k)$ initiated by $k$th particle at time $\delta t$ and 
\[
B_k = \big\{ k\text{th particle alive at time } \delta t \text{ does not die in the time interval } [\delta t, l] \big\}.
\]
Since $\mathbb{P}(B_k | \mathcal{F}_{\delta t}) = \mathrm{e}^{-\beta (l - \delta t)}$, $k = 1$, ..., $N_{\delta t}$, it follows that 
\begin{equation}
\label{negligible}
\mathbb{P} \Big( \sum_{k=1}^{N_{\delta t}} \mathbf{1}_{B_k} > 0 \Big\vert \mathcal{F}_{\delta t} \Big) \leq \mathbb{E} \Big[ 
\sum_{k=1}^{N_{\delta t}} \mathbf{1}_{B_k} \big\vert \mathcal{F}_{\delta t} \Big] = \mathrm{e}^{-\beta (l - \delta t)} N_{\delta t} 
= \mathrm{e}^{-\beta(l - m \delta t)} M_{\delta t} \to 0
\end{equation}
$\mathbb{P}$-a.s. as $t \to \infty$.

Let us also consider
\[
\hat{\varphi}^l_{(1-\delta)t} (\theta) = \mathbb{E} \Big[ \theta^{\hat{N}^l_{(1-\delta)t}} \Big] = \sum_{k=0}^\infty \theta^k \mathbb{P} \big(\hat{N}^l_{(1-\delta)t} = k \big)
\]
for $\theta \in (-1, 1)$. We may write it as  
\begin{align*}
\hat{\varphi}^l_{(1-\delta)t} (\theta) = &\mathbb{P}\big(\hat{N}^l_{(1-\delta)t} = 0 \big) + \theta \mathbb{P}\big(\hat{N}^l_{(1-\delta)t} >0 \big) + \sum_{k=2}^\infty (\theta^k - \theta) \mathbb{P}\big(\hat{N}^l_{(1-\delta)t} = k \big)\\
= &1 + (\theta -1) \mathbb{P}\big(\hat{N}^l_{(1-\delta)t} > 0 \big) + \sum_{k=2}^\infty (\theta^k - \theta) \mathbb{P}\big(\hat{N}^l_{(1-\delta)t} = k \big).
\end{align*}
Then recalling from Proposition \ref{p_hat_tilde} that 
\[
\mathbb{P}\big(\hat{N}^l_{(1-\delta)t} > 0 \big) \sim \mathbb{E} \hat{N}^l_{(1-\delta)t} = \hat{m}^l_{(1-\delta)t} \sim 
\frac{1}{m-1} \mathrm{e}^{-\beta (ml - (m-1)(1-\delta)t)} = \frac{\mathrm{e}^{-\beta mx}}{m-1} \mathrm{e}^{-\beta (m-1) \delta t}
\]
as $t \to \infty$ and using the fact that $|\theta^k - \theta| \leq 2$ we see that
\[
\Big\vert \sum_{k=2}^\infty (\theta^k - \theta) \mathbb{P}\big(\hat{N}^l_{(1-\delta)t} = k \big) \Big\vert \leq 2 \mathbb{P}\big(\hat{N}^l_{(1-\delta)t} > 1 \big) \leq 2 \Big( \mathbb{E} \hat{N}^l_{(1-\delta)t} - \mathbb{P}\big(\hat{N}^l_{(1-\delta)t} > 0 \big) \Big) 
= o \big( \mathrm{e}^{-\beta (m-1) \delta t} \big)
\]
as $t \to \infty$ and therefore 
\begin{align}
\label{phi_hat}
\hat{\varphi}^l_{(1-\delta)t} (\theta) = &1 + (\theta - 1) \mathbb{P}\big(\hat{N}^l_{(1-\delta)t} > 0 \big)  + o \big( \mathrm{e}^{-\beta (m-1) \delta t} \big)\nonumber\\
= &1 + (\theta - 1) \frac{\mathrm{e}^{-\beta mx}}{m-1} \mathrm{e}^{-\beta (m-1) \delta t}  + o \big( \mathrm{e}^{-\beta (m-1) \delta t} \big)
\end{align}
Then using \eqref{N_hat_bounds}, \eqref{negligible}, the facts that $\mathbb{E} \big[ \theta^{\sum_{k=1}^{N_{\delta t}} \hat{N}_{(1-\delta)t}^l(k)} \big\vert \mathcal{F}_{\delta t} \big] = \big( \hat{\varphi}^l_{(1-\delta)t} (\theta) \big)^{N_{\delta t}}$,

 $\{N_{\delta t} > 0\} \searrow \{$survival$\}$ and $\mathbb{P}(M_\infty > 0 | $survival$) = 1$ and bounded convergence we see that 
\begin{align*}
\mathbb{E} \Big[ \theta^{\hat{N}^l_t} \big\vert \text{survival} \Big] 
= &\mathbb{E} \Big[ \theta^{\hat{N}^l_t} \big\vert N_{\delta t} > 0 \Big] + o(1)\\
= &\mathbb{E} \Big[ \theta^{\sum_{k=1}^{N_{\delta t}} \hat{N}_{(1-\delta)t}^l(k)} \big\vert N_{\delta t} > 0 \Big] + o(1)\\ 
= &\mathbb{E} \Big[ \Big( \hat{\varphi}^l_{(1-\delta)t} (\theta) \Big)^{N_{\delta t}} \big\vert N_{\delta t} > 0 \Big] + o(1)\\
= &\mathbb{E} \Big[ \Big( \hat{\varphi}^l_{(1-\delta)t} (\theta) \Big)^{N_{\delta t}} \big\vert \text{survival} \Big] + o(1)\\
= &\mathbb{E} \Big[ \Big( 1 + (\theta - 1) \frac{\mathrm{e}^{-\beta mx}}{m-1} \mathrm{e}^{-\beta (m-1) \delta t}  + o \big( \mathrm{e}^{-\beta (m-1) \delta t} \big) \Big)^{\mathrm{e}^{\beta (m-1)\delta t} M_{\delta t}} \big\vert M_\infty > 0 \Big] + o(1)\\
\to &\mathbb{E} \Big[ \exp \Big\{ (\theta-1) \frac{\mathrm{e}^{-m \beta x}}{m-1} M_\infty \Big\} \big\vert M_\infty>0 \Big] 
\end{align*}
as $t \to \infty$, which proves \eqref{pgf_interior}. Convergence in \eqref{pgf_all} is proved in exactly the same way.
\end{proof}
\section{Proof of Proposition \ref{p_hat_tilde}}
Recall that in the previous section we have calculated $\hat{m}_t^l = \mathbb{E} \hat{N}_t^l$ and $\tilde{m}_t^l = \mathbb{E} \tilde{N}_t^l$, which was relatively easy. Let us now consider $\hat{v}_t^l = \mathbb{E} [(\hat{N}_t^l )^2]$ and $\tilde{v}_t^l = \mathbb{E} [( \tilde{N}_t^l )^2 ]$. Finding the exact expressions for both of these quantities is quite possible (as will be apparent from the calculations below) but they are cumbersome. Instead, we will find simple upper bounds on $\hat{v}_t^l$ and $\tilde{v}_t^l$. These upper bounds will be used to tell us that if $v<\infty$ then under conditions \eqref{condition1} and \eqref{condition2} on $l$, $\hat{m}_t^l \sim \hat{v}_t^l$ and $\tilde{m}_t^l \sim \tilde{v}_t^l$ as $t \to \infty$. This together with a suitable truncation of the branching process $\mathcal{T}_t$, $t \geq 0$ will enable us to prove Proposition \ref{p_hat_tilde}.  
\begin{Proposition}
\label{v_hat_tilde}
Let us suppose that $v = \mathbb{E} [\xi^2] = \sum_{k \geq 0} k^2 p_k < \infty$. If $t \geq l$ then
\begin{equation}
\label{v_hat_upper}
\hat{v}_t^l \leq \frac{1}{m-1} \mathrm{e}^{-\beta(ml - (m-1)t)} \Big[ 1+ 2 \beta m (t-l) \mathrm{e}^{- \beta m l} + 
\frac{v-m}{(m-1)^2} \mathrm{e}^{-\beta(ml - (m-1)t)} \Big]
\end{equation}
and 
\begin{equation}
\label{v_tilde_upper}
\tilde{v}_t^l \leq \frac{m}{m-1} \mathrm{e}^{-\beta(ml - (m-1)t)} \Big[ 1+ 2 \beta (t-l) \mathrm{e}^{- \beta m l} + 
\frac{(v-m)(m^2 - 2m + 2)}{m(m-1)^2} \mathrm{e}^{-\beta(ml - (m-1)t)} \Big].
\end{equation}
\end{Proposition}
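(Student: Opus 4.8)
The plan is to set up and solve a renewal (second-moment) equation for $\hat v_t^l$ and $\tilde v_t^l$, mirroring the first-moment argument of Proposition \ref{m_hat_tilde}, and then read off the upper bounds by bounding the solution. I will carry out the argument for $\hat v_t^l$; the all-edges case is identical after replacing the one-step decomposition \eqref{N} by its analogue $\tilde N_t^l = \mathbf{1}_{\{T>l\}} + \mathbf{1}_{\{0\le T\le t\}}\sum_{k=1}^\xi \tilde N_{t-T}^l(k)$.

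First I would square \eqref{N}, writing $\hat N_t^l = Y + Z$ with $Y=\mathbf{1}_{\{l<T\le t\}}$ and $Z = \mathbf{1}_{\{0\le T\le t\}}\sum_{k=1}^\xi \hat N_{t-T}^l(k)$, and take expectations of $Y^2 + 2YZ + Z^2$. Conditioning on $(T,\xi)$ and using that the $\xi$ subtrees are i.i.d.\ copies independent of $(T,\xi)$, together with $\mathbb{E}[\xi]=m$ and $\mathbb{E}[\xi(\xi-1)]=v-m$ (this is the only place finiteness of $v$ is used), the term $\mathbb{E}[Z^2]$ splits into a diagonal part $\beta m\int_0^t e^{-\beta s}\hat v_{t-s}^l\,\mathrm{d}s$ and an off-diagonal part $(v-m)\beta\int_0^t e^{-\beta s}(\hat m_{t-s}^l)^2\,\mathrm{d}s$. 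Collecting terms gives the renewal equation
\begin{equation*}
\hat v_t^l = h(t) + \beta m\int_0^t e^{-\beta s}\hat v_{t-s}^l\,\mathrm{d}s, \qquad h = h_a + h_b + h_c,
\end{equation*}
with kernel $\kappa(s)=\beta m e^{-\beta s}$ and sources $h_a(t)=\mathbb{E}[Y^2]=e^{-\beta l}-e^{-\beta t}$, $h_b(t)=2\beta m\int_l^t e^{-\beta s}\hat m_{t-s}^l\,\mathrm{d}s$, and $h_c(t)=(v-m)\beta\int_0^t e^{-\beta s}(\hat m_{t-s}^l)^2\,\mathrm{d}s$, where $\hat m$ is the known first moment from \eqref{m_hat}.

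Next I would solve this explicitly (this is cleaner than passing to the ODE, though that route is equivalent). Summing the convolution powers of $\kappa$ gives the renewal density $U(s)=\sum_{n\ge1}\kappa^{*n}(s)=\beta m\,e^{\beta(m-1)s}$, so the unique solution is the linear functional $\hat v_t^l = h(t)+\int_0^t U(s)h(t-s)\,\mathrm{d}s =: \mathcal R[h](t)$. By linearity $\hat v_t^l = \mathcal R[h_a]+\mathcal R[h_b]+\mathcal R[h_c]$, and these three pieces produce exactly the three terms in \eqref{v_hat_upper}. The first is immediate: since $\hat m$ itself solves $\hat m = h_a + \kappa*\hat m$, we have $\mathcal R[h_a]=\hat m_t^l \le \frac{1}{m-1}e^{-\beta(ml-(m-1)t)}$, the leading term. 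For $\mathcal R[h_b]$ the key feature is that, after inserting the first-moment formula, the integrand of the outer renewal convolution is flat, $e^{\beta(m-1)s}e^{\beta(m-1)(t-s)}=e^{\beta(m-1)t}$, so integrating in $s$ over an interval of length $\sim t-l$ generates the linear factor $(t-l)$ and the term proportional to $2\beta m(t-l)e^{-\beta ml}$. For $\mathcal R[h_c]$ the squared first moment contributes the $e^{-2\beta(ml-(m-1)t)}$ term carrying the variance coefficient $v-m$.

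The all-edges bound \eqref{v_tilde_upper} then follows identically, now with $Y=\mathbf{1}_{\{T>l\}}$ (so $\mathbb{E}[Y^2]=e^{-\beta l}$ and $\mathcal R[h_a]=\tilde m_t^l$) and with $\hat m$ replaced by $\tilde m$ throughout. I expect the main obstacle to be purely computational: obtaining the \emph{precise} constants as stated — in particular the coefficient $2\beta$ rather than $2\beta m$ in the cross term and the factor $m^2-2m+2=(m-1)^2+1$ in the quadratic term for all edges — requires carrying the exact first moments $\hat m,\tilde m$ (including their lower-order $-1$ summands) through $\mathcal R$ and tracking the supports of the integrals carefully, rather than crudely bounding the sources $h_b,h_c$ first (which would give valid but strictly looser constants, e.g.\ $2\beta m$ and $m^2$). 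This exact integration, while routine, is lengthy and is precisely the bookkeeping the statement packages; the appearance of $(m-1)^2+1$ in the all-edges case reflects the three kinds of pendant/interior pairings that the exact computation keeps separate.
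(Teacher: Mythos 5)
Your treatment of $\hat v_t^l$ is essentially the paper's proof: the paper also squares \eqref{N}, takes expectations to get exactly your renewal equation with sources $h_a,h_b,h_c$, and then solves it (via the equivalent route of differentiating and using the integrating factor $\mathrm{e}^{-\beta(m-1)t}$ rather than convolving with the renewal density $U(s)=\beta m\,\mathrm{e}^{\beta(m-1)s}$), finally bounding the three resulting terms exactly as you indicate: the $h_a$-piece gives $\hat m_t^l$, the cross term gives $2\beta m(t-l)\mathrm{e}^{-\beta m l}$ from a flat integrand over an interval of length at most $t-l$, and the $(v-m)$-piece gives the quadratic term. That half is correct.

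The gap is in the all-edges bound \eqref{v_tilde_upper}. The paper does \emph{not} redo the renewal computation for $\tilde N_t^l$; it obtains \eqref{v_tilde_upper} from \eqref{v_hat_upper} together with the pendant second moment \eqref{v_t_l}, via $\tilde v_t^l\le \hat v_t^l+v_t^l$ (indeed \eqref{v_tilde_upper} is precisely $\hat v_t^l+v_t^l$ with a negative term discarded — which is where the otherwise mysterious constants $2\beta$ and $m^2-2m+2=(m-1)^2+1$ come from). Your proposed direct route, with source terms built from $\tilde m$ instead of $\hat m$, cannot produce these constants, and your hope that "carrying the exact first moments through $\mathcal R$" will recover them is not realized. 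Concretely, the cross-term contribution in your scheme is $\mathrm{e}^{\beta(m-1)t}\int_{2l}^{t}\mathrm{e}^{-\beta(m-1)s}\,2\beta m\,\mathrm{e}^{-\beta l}\tilde m^l_{s-l}\,\mathrm{d}s$, and since $\tilde m^l_{s-l}=\frac{\mathrm{e}^{-\beta l}}{m-1}\bigl(m\,\mathrm{e}^{\beta(m-1)(s-2l)}-1\bigr)$ has leading coefficient $\frac{m}{m-1}$ (not $\frac{1}{m-1}$ as for $\hat m$), the flat part of the integrand has height $m\,\mathrm{e}^{-2\beta(m-1)l}$; integrating over $[2l,t]$ yields, relative to the prefactor $\frac{m}{m-1}\mathrm{e}^{-\beta(ml-(m-1)t)}$, a bracketed cross term asymptotic to $2\beta m(t-2l)\mathrm{e}^{-\beta m l}$, and similarly a quadratic term with coefficient $\frac{(v-m)m}{(m-1)^2}$ rather than $\frac{(v-m)(m^2-2m+2)}{m(m-1)^2}$. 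Both exceed the stated constants for every $m>1$, and the excess is not an artifact of crude bounding — the dominant contributions are computed exactly above — so your route proves a valid but strictly weaker inequality, not \eqref{v_tilde_upper} as stated. (Those weaker constants would still suffice for the only downstream use, Proposition \ref{p_hat_tilde}, which needs only that the bracket be $1+o(1)$ under \eqref{condition1} and \eqref{condition2}; and it is worth noting that the paper's own shortcut silently drops the nonnegative cross moment $2\,\mathbb{E}[N_t^l\hat N_t^l]$ in asserting $\tilde v_t^l\le\hat v_t^l+v_t^l$, a term your direct computation would correctly account for. But as a proof of the Proposition as written, the all-edges half of your argument does not close.)
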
 
\begin{proof}
We follow the same steps as in the proof of Proposition \ref{m_hat_tilde}. Let us fix $l$ and let $t > l$. From \eqref{N} we get that
\[
\big( \hat{N}_t^l \big)^2 = \mathbf{1}_{\{l <T \leq t\}} + 2 \cdot \mathbf{1}_{\{l \leq T \leq t\}} \sum_{k=1}^\xi \hat{N}_{t-T}^l(k) 
+ \mathbf{1}_{\{0 \leq T \leq t\}} \Big( \sum_{k=1}^\xi \hat{N}_{t-T}^l(k) \Big)^2
\]
Taking the expectation and using the Markov property we get the integral equation for $\hat{v}_t^l$:
\begin{align*}
\hat{v}_t^l = &(\mathrm{e}^{- \beta l} - \mathrm{e}^{- \beta t}) + 2 \mathbb{E}(\xi) \int_l^t \hat{m}_{t-s}^l \beta \mathrm{e}^{- \beta s} \mathrm{d}s + \int_0^t \big( \mathbb{E}(\xi) \cdot \hat{v}_{t-s}^l + \mathbb{E}(\xi^2 - \xi) \cdot (\hat{m}_{t-s}^l)^2 \big) \beta \mathrm{e}^{- \beta s} \mathrm{d}s\\
= &(\mathrm{e}^{- \beta l} - \mathrm{e}^{- \beta t}) + 2m \beta \mathrm{e}^{-\beta t} \int_0^t \hat{m}_u^l \mathrm{e}^{\beta u} \mathrm{d}u + \beta \mathrm{e}^{- \beta t} \int_0^t \big( m \hat{v}_u^l +(v-m)(\hat{m}_u^l)^2\big) \mathrm{e}^{\beta u} \mathrm{d}u
\end{align*}
Differentiating it with respect to $t$ yields the differential equation 
\begin{equation*}
\left\{
\begin{array}{rl}
(\hat{v}_t^l)' &= \beta(m-1) \hat{v}_t^l + \beta\mathrm{e}^{-\beta l} + 2 \beta m \mathrm{e}^{-\beta l} \hat{m}_{t-l}^l + \beta (v-m) (\hat{m}_t^l)^2 \quad ,\ t > l\\
\hat{v}_{l+}^l & = 0
\end{array} \right.
\end{equation*}
Multiplying by the integrating factor $\mathrm{e}^{- \beta (m-1) t}$ gives
\begin{align*}
(\hat{v}_t^l)' \mathrm{e}^{- \beta (m-1) t} - \beta (m-1) \mathrm{e}^{- \beta (m-1) t} \hat{v}_t^l = 
&\beta \mathrm{e}^{- \beta l} \mathrm{e}^{- \beta (m-1) t} + 2 \beta m \mathrm{e}^{- \beta l} \mathrm{e}^{- \beta (m-1) t} \hat{m}_{t-l}^l\\
&+ \beta (v-m) \mathrm{e}^{- \beta (m-1) t} (\hat{m}_t^l)^2 
\end{align*}
Integrating this equation between $l$ and $t$ and then multiplying by $\mathrm{e}^{\beta (m-1) t}$ gives 
\begin{align}
\label{v}
\hat{v}_t^l = &\mathrm{e}^{\beta (m-1) t} \int_l^t \mathrm{e}^{- \beta (m-1) s} \Big( \beta \mathrm{e}^{- \beta l} + 
2 \beta m \mathrm{e}^{- \beta l} \hat{m}_{s-l}^l + \beta (v-m) (\hat{m}_s^l)^2 \Big) \mathrm{d}s \nonumber\\
= &\mathrm{e}^{\beta (m-1) t} \Big[ \beta \mathrm{e}^{- \beta m l} \int_l^t \mathrm{e}^{- \beta (m-1) (s-l)} \mathrm{d}s \nonumber\\ 
&\qquad\qquad + \frac{2 \beta m}{m-1} \mathrm{e}^{- 2\beta ml} \int_{2l}^t \mathrm{e}^{- \beta (m-1)(s-2l)} \big( \mathrm{e}^{\beta (m-1)(s-2l)} - 1 \big) \mathrm{d}s \mathbf{1}_{\{t \geq 2l\}} \nonumber\\
&\qquad\qquad + \frac{\beta(v-m)}{(m-1)^2} \mathrm{e}^{- \beta (m+1)l} \int_l^t \mathrm{e}^{-\beta (m-1)(s-l)} \big( \mathrm{e}^{\beta(m-1)(s-l)} - 1\big)^2 \mathrm{d}s \Big]\nonumber\\
= &\frac{1}{m-1} \mathrm{e}^{-\beta (ml - (m-1)t)} \Big[ \beta(m-1) \int_l^t \mathrm{e}^{-\beta (m-1)(s-l)} \mathrm{d}s\nonumber\\
&\qquad\qquad\qquad\qquad\qquad + 2 \beta m \mathrm{e}^{- \beta m l} \int_{2l}^t 1 - \mathrm{e}^{-\beta (m-1)(s-2l)} \mathrm{d}s \mathbf{1}_{\{t \geq 2l\}}\nonumber\\
&\qquad\qquad\qquad\qquad\qquad + \frac{\beta (v-m)}{m-1} \mathrm{e}^{- \beta l} \int_l^t \mathrm{e}^{-\beta(m-1)(s-l)} \big( \mathrm{e}^{\beta(m-1)(s-l)} - 1 \big)^2 \mathrm{d}s \Big].
\end{align}
Let us now give an upper bound on each of the three terms inside the square brackets, which we refer to as (I), (II) and (III):
\[
(I) = \int_l^t \mathrm{e}^{-\beta(m-1)(s-l)} \beta (m-1) \mathrm{d}s \leq \int_0^\infty \mathrm{e}^{-\beta(m-1) \tau} \beta (m-1) \mathrm{d}\tau = 1,
\]
\[
(II) = 2 \beta m \mathrm{e}^{-\beta m l} \int_{2l}^t 1 - \mathrm{e}^{-\beta(m-1)(s-2l)} \mathrm{d}s \mathbf{1}_{\{t \geq 2l\}} \leq 
2 \beta m \mathrm{e}^{-\beta m l}(t - 2l) \mathbf{1}_{\{t \geq 2l\}} \leq 2 \beta m \mathrm{e}^{-\beta m l} (t-l)
\]
and
\begin{align*}
(III) \leq &\frac{\beta (v-m)}{m-1} \mathrm{e}^{- \beta l} \int_l^t \mathrm{e}^{-\beta (m-1)(s-l)} \big( \mathrm{e}^{\beta(m-1)(s-l)} - 1\big)^2 \mathrm{d}s\\
= &\frac{v-m}{(m-1)^2} \mathrm{e}^{-\beta l} \int_0^{t-l} \mathrm{e}^{-\beta(m-1) \tau} \big( \mathrm{e}^{\beta(m-1)\tau} - 1 \big)^2 \beta (m-1) \mathrm{d} \tau\\
\leq &\frac{v-m}{(m-1)^2} \mathrm{e}^{-\beta l} \int_0^{t-l} \mathrm{e}^{\beta (m-1) \tau} \beta (m-1) \mathrm{d} \tau\\
\leq &\frac{v-m}{(m-1)^2} \mathrm{e}^{-\beta l} \mathrm{e}^{\beta(m-1)(t-l)}\\
= &\frac{v-m}{(m-1)^2} \mathrm{e}^{-\beta(ml - (m-1)t)},
\end{align*}
which establishes \eqref{v_hat_upper}. Inequality \eqref{v_tilde_upper} is then easily derived from \eqref{v_hat_upper} and \eqref{v_t_l} using the fact that $\tilde{v}_t^l \leq \hat{v}_t^l + v_t^l$.
\end{proof}
Let us now apply upper bounds \eqref{v_hat_upper} and \eqref{v_tilde_upper} on $\hat{v}_t^l$ and $\tilde{v}_t^l$ to prove Proposition \ref{p_hat_tilde}. Namely, that if $l$ depends on $t$ in any such way that $t-l \to \infty$ and $ml - (m-1)t \to \infty$ as $t \to \infty$ (conditions \eqref{condition1} and \eqref{condition2}) then 
\[
\mathbb{P} \big( \hat{N}_t^l > 0 \big) \sim \hat{m}_t^l \sim \frac{1}{m-1} \mathrm{e}^{-\beta (ml - (m-1)t)} 
\]
and
\[
\mathbb{P} \big( \tilde{N}_t^l > 0 \big) \sim \tilde{m}_t^l \sim \frac{m}{m-1} \mathrm{e}^{-\beta (ml - (m-1)t)}.
\]
\begin{proof}[Proof of Proposition \ref{p_hat_tilde}]
Trivially, $\mathbb{P} \big( \hat{N}_t^l > 0 \big) \leq \mathbb{E} \hat{N}_t^l = \hat{m}_t^l$ and $\mathbb{P} \big( \tilde{N}_t^l > 0 \big) \leq \mathbb{E} \tilde{N}_t^l = \tilde{m}_t^l$. Therefore, 
\begin{equation}
\label{p_hat_upper}
\limsup_{t \to \infty} \mathrm{e}^{\beta (ml - (m-1)t)} \mathbb{P} \big( \hat{N}_t^l > 0 \big) \leq \frac{1}{m-1}
\end{equation}
and 
\begin{equation}
\label{p_tilde_upper}
\limsup_{t \to \infty} \mathrm{e}^{\beta (ml - (m-1)t)} \mathbb{P} \big( \tilde{N}_t^l > 0 \big) \leq \frac{m}{m-1}.
\end{equation}
To establish corresponding lower bounds let us consider two cases: $v<\infty$ and $v=\infty$.

\underline{Case (I): $v < \infty$}:
Note that under conditions \eqref{condition1} and \eqref{condition2} inequalities \eqref{v_hat_upper} and \eqref{v_tilde_upper} say that 
\[
\hat{v}_t^l \leq \frac{1}{m-1} \mathrm{e}^{-\beta(ml - (m-1)t)} \Big[ 1+ o(1) \Big] \sim \hat{m}^l_t
\]
and 
\[
\tilde{v}_t^l \leq \frac{m}{m-1} \mathrm{e}^{-\beta(ml - (m-1)t)} \Big[ 1+ o(1) \Big] \sim \tilde{m}^l_t
\]
as $t \to \infty$. Then, since also $\hat{v}_t^l \geq \hat{m}_t^l$ and $\tilde{v}_t^l \geq \tilde{m}_t^l$, it follows that $\hat{v}_t^l \sim \hat{m}_t^l$ and $\tilde{v}_t^l \sim \tilde{m}_t^l$. Hence by Paley-Zygmund inequality
\[
\mathbb{P} \big( \hat{N}_t^l > 0 \big) \geq \frac{\big( \mathbb{E} \hat{N}_t^l \big)^2}{\mathbb{E} \big[ \big( \hat{N}_t^l \big)^2\big]} = 
\frac{(\hat{m}_t^l)^2}{\hat{v}_t^l} \sim \hat{m}^l_t
\]
as $t \to \infty$ and likewise for $\mathbb{P} \big( \tilde{N}_t^l > 0 \big)$.

\underline{Case (II): $v = \infty$}:

The issue here is that now $\hat{v}_t^l = \tilde{v}^l_t = \infty$ and so Paley-Zygmund inequality is of no help to us. To overcome this problem we consider the truncated process $\mathcal{T}_t^K$, $t \geq 0$ derived from $\mathcal{T}_t$, $t \geq 0$ by having all the offspring killed at each birth event which produces more than $K$ particles (where $K \geq 1$ is some given number). Then every edge in $\mathcal{T}_t^K$ is an edge in $\mathcal{T}_t$ and hence $\mathbb{P} \big( \hat{N}_t^l > 0\big) \geq \mathbb{P} \big( \hat{N}_t^{K,l} > 0\big)$ and $\mathbb{P} \big( \tilde{N}_t^l > 0\big) \geq \mathbb{P} \big( \tilde{N}_t^{K,l} > 0\big)$, where $\hat{N}_t^{K,l}$ and $\tilde{N}_t^{K,l}$ are numbers of interior and all (interior and pendant) edges of length $\geq l$ in $\mathcal{T}_t^K$.

Moreover, the truncated process $\mathcal{T}_t^K$, $t \geq 0$ is a Galton-Watson process with branching rate $\beta$ and offspring distribution $p^K_k$, $k \geq 0$, where 
\[
p_0^K = \mathbb{P}(\xi = 0) + \mathbb{P}(\xi > K) = p_0 + p_{K+1} + p_{K+2} + \cdots,
\]
\[
p_1^K = \mathbb{P}(\xi = 1) = p_1, \ p_2^K = \mathbb{P}(\xi = 2) = p_2, \ \cdots , \ p_K^K = \mathbb{P}(\xi = K) = p_K
\]
and
\[
p_{K+1}^K = p_{K+2}^K = \cdots = 0.
\]
If we let $m^K = \sum_{k \geq 0} k p_k^K = \sum_{k=0}^K k p_k$ and $v^K = \sum_{k\geq 0} k^2 p_k^K = \sum_{k=0}^K k^2 p_k$ denote the first and second moments of the offspring distribution in the truncated Galton-Watson process then we see that $v^K \leq K^2$ and 
\[
m - m^K = \mathbb{E} \big[ \xi \mathbf{1}_{\{\xi > K\}} \big] \leq \mathbb{E} \Big[ \xi \frac{\log^+ \xi}{\log K} \mathbf{1}_{\{\xi > K\}} \Big] \leq \frac{F(K)}{\log K},
\]
where $F(K) = \mathbb{E} \big[ \xi \log^+ \xi \mathbf{1}_{\{\xi > K\}} \big]$ is a function such that $F(K) \to 0$ as $K \to \infty$. Hence
\begin{align}
\label{m_K}
m - \frac{F(K)}{\log K} \leq m^K \leq m.
\end{align}
We then have that 
\begin{equation}
 \label{paley_zygmund}
\mathbb{P} \big( \hat{N}_t^l > 0\big) \geq \mathbb{P} \big( \hat{N}_t^{K,l} > 0\big) \geq \frac{\big( \mathbb{E} \hat{N}_t^{K,l} \big)^2}{\mathbb{E} \big[ \big( \hat{N}_t^{K,l} \big)^2\big]}
\end{equation}
for any $K \geq 1$. Let us now take $K = K(t) = \lfloor \mathrm{e}^{c(ml - (m-1)t)} \rfloor$ for any $c \in (0, \frac{\beta}{2})$.
Then from \eqref{m_hat_tilde} we have that 
\[
\mathbb{E} \hat{N}_t^{K,l} = \frac{1}{m^K-1} \mathrm{e}^{-\beta l} \big( \mathrm{e}^{\beta (m^K-1)(t-l)} -1 \big).
\]
From \eqref{m_K} we know that 
\[
m(t-l) - \frac{F(K)(t-l)}{ct - cm(t-l)} \leq m^K(t-l) \leq m(t-l)
\]
from which it follows that $m^K(t-l) = m(t-l) + o(1)$ and hence 
\[
\mathrm{e}^{\beta (m^K-1)(t-l)} \sim \mathrm{e}^{\beta (m-1)(t-l)}
\]
as $t \to \infty$. Thus 
\begin{equation}
\label{m_hat_K}
\mathbb{E} \hat{N}_t^{K,l} \sim \frac{1}{m-1} \mathrm{e}^{-\beta l} \mathrm{e}^{\beta(m-1)(t-l)} = \frac{1}{m-1} \mathrm{e}^{-\beta (ml - (m-1)t)}
\end{equation}
as $t \to \infty$. Also, from \eqref{v_hat_upper} we have that 
\begin{align}
\label{v_hat_K_upper}
\mathbb{E} \hat{N}_t^{K,l} \leq \mathbb{E} \Big[ \big( \hat{N}_t^{K,l} \big)^2 \Big] \leq \frac{1}{m^K-1} \mathrm{e}^{-\beta(m^Kl - (m^K-1)t)} \Big[ 1+ &2 \beta m^K (t-l) \mathrm{e}^{- \beta m^K l}\nonumber\\ 
+ &\frac{v^K-m^K}{(m^K-1)^2} \mathrm{e}^{-\beta(m^Kl - (m^K-1)t)} \Big].
\end{align}
For the same reason as above, 
\[
\mathrm{e}^{-\beta(m^Kl - (m^K-1)t)} \sim \mathrm{e}^{-\beta(ml - (m-1)t)}
\]
as $t \to \infty$. We see that the second term inside the square brackets of \eqref{v_hat_K_upper} converges to $0$ as $t \to \infty$. So does the third term since
\begin{align*}
v^K \mathrm{e}^{-\beta(m^Kl - (m^K-1)t)} \leq K^2 \mathrm{e}^{-\beta(m^Kl - (m^K-1)t)} \leq &\mathrm{e}^{2c(ml - (m-1)t)} \mathrm{e}^{-\beta(m^Kl - (m^K-1)t)}\\
\leq &\mathrm{e}^{(2c -\beta)(ml - (m-1)t)} \to 0
\end{align*}
as $t \to \infty$. It follows that
\begin{align}
\label{v_hat_K}
\mathbb{E} \Big[ \big( \hat{N}_t^{K,l} \big)^2 \Big] \sim \frac{1}{m-1} \mathrm{e}^{-\beta(ml - (m-1)t)}.
\end{align}
Combining \eqref{paley_zygmund}, \eqref{m_hat_K} and \eqref{v_hat_K} yields
\begin{equation}
\label{p_hat_lower}
\liminf_{t \to \infty} \mathrm{e}^{\beta (ml - (m-1)t)} \mathbb{P} \big( \hat{N}_t^l > 0 \big) \geq \frac{1}{m-1}.
\end{equation}
Similarly it can be shown that 
\begin{equation}
\label{p_tilde_lower}
\liminf_{t \to \infty} \mathrm{e}^{\beta (ml - (m-1)t)} \mathbb{P} \big( \tilde{N}_t^l > 0 \big) \geq \frac{m}{m-1}.
\end{equation}
Inequalities \eqref{p_hat_upper}, \eqref{p_hat_lower}, \eqref{p_tilde_upper} and \eqref{p_tilde_lower} establish asymptotic relations \eqref{p_hat_limit} and \eqref{p_tilde_limit}.
\end{proof}
\section{Proof of Theorem \ref{a_s}}
In this last section we prove that $\frac{L_t^{(k)}}{t}$, $\frac{\hat{L}_t^{(k)}}{t}$ and $\frac{\tilde{L}_t^{(k)}}{t}$ for any choice of $k = 1, 2, 3, $... all converge to $\alpha^\ast$  as $t \to \infty$ a.s. on survival. 

We do this by first establishing convergence along integers and then expanding it to convergence along real numbers.
\begin{proof}[Proof of convergence in Theorem \ref{a_s} along integers]
We choose $k = 1, 2, 3, $... and observe that for any $\alpha \in (\alpha^\ast, 1)$ and $n \in \mathbb{N}$
\[
\mathbb{P} \big( \frac{L^{(k)}_n}{n} \geq \alpha \big) = \mathbb{P} \big( L^{(k)}_n \geq \alpha n \big) = \mathbb{P} \big( N_n^{\alpha n} \geq k \big) \leq \frac{1}{k} \mathbb{E} N_n^{\alpha n} = \frac{1}{k} \mathrm{e}^{- \beta m (\alpha - \alpha^\ast)n},
\]
which is summable over $n$ and hence by Borel-Cantelli Lemma $\mathbb{P} \big( \frac{L^{(k)}_n}{n} \geq \alpha \text{ i.o.}\big) = 0$ and also $\mathbb{P} \big( \frac{L^{(k)}_n}{n} \geq \alpha \text{ i.o.}| $survival$\big) = 0$. Therefore for any $\alpha \in (\alpha^\ast, 1)$
\[
\limsup_{n \to \infty} \frac{L^{(k)}_n}{n} \leq \alpha
\]
a. s. on survival and hence 
\begin{equation}
\label{limsup}
\limsup_{n \to \infty} \frac{L^{(k)}_n}{n} \leq \alpha^\ast
\end{equation}
a. s. on survival. The same argument applies to $\hat{L}^{(k)}_n$ and $\tilde{L}^{(k)}_n$.

On the other hand, we may take any $\alpha \in (0, \alpha^\ast)$ and let $\delta = 1 - \frac{\alpha}{\alpha^\ast}$ (so that $\alpha = \alpha^\ast (1- \delta)$). Then 
\[
\big\{ N_n^{\alpha n} < k \big\} \subseteq \bigcap_{j=1}^{N_{\delta n}} \big\{ N_{(1- \delta)n}^{\alpha n}(j) < k \big\},
\]
where $N_{(1- \delta)n}^{\alpha n}(j) = N_{(1- \delta)n}^{\alpha^\ast (1-\delta) n}(j)$ are the numbers of pendant edges of length $\geq \alpha n$ in subtrees $\mathcal{T}_{(1-\delta)n}(j)$ initiated by particles $j = 1$, $2$, ..., $N_{\delta n}$ at time $\delta n$ and evolved up to time $n$. Then we check that 
\begin{align*}
\mathbb{P} \big( \frac{L_n^{(k)}}{n} < \alpha, \ N_{\delta n} > n\big) = &\mathbb{P} \big( N_n^{\alpha n} < k, \ N_{\delta n} > n\big)\\
\leq &\mathbb{P} \Big( \bigcap_{j=1}^{N_{\delta n}} \big\{ N_{(1- \delta)n}^{\alpha n}(j) < k \big\} , \ N_{\delta n} > n\Big)\\
= &\mathbb{E} \Big[ \Big( \mathbb{P} \big( N_{(1- \delta)n}^{\alpha n} < k \big) \Big)^{N_{\delta n}} \mathbf{1}_{\{N_{\delta n} > n\}} \Big]\\
\leq &\mathbb{P} \big( N_{(1- \delta)n}^{\alpha^\ast (1-\delta) n} < k \big)^n \mathbb{P}\big(N_{\delta n} > n \big).
\end{align*}
Since $\mathbb{P} \big( N_{(1- \delta)n}^{\alpha^\ast (1-\delta) n} < k \big) \to \mathbb{P}(V_0 < k) < 1$ as $n \to \infty$ it follows that $\mathbb{P} ( \frac{L_n^{(k)}}{n} < \alpha, \ N_{\delta n} > n)$ is summable over $n$. Therefore 
\[
\mathbb{P} \big( \frac{L_n^{(k)}}{n} < \alpha, \ N_{\delta n} > n \text{ i.o.}\big) = 0
\]
and so also 
\[
\mathbb{P} \big( \frac{L_n^{(k)}}{n} < \alpha, \ N_{\delta n} > n \text{ i.o.} \big\vert \text{survival} \big) = 0.
\]
Since 
\[
\mathbb{P} \big( N_{\delta n} > n \text{ ev.} \big\vert \text{survival} \big) = 1.
\]
it must be that 
\[
\mathbb{P} \big( \frac{L_n^{(k)}}{n} < \alpha \text{ i.o.} \big\vert \text{survival} \big) = 0.
\]
and thus 
\[
\liminf_{n \to \infty} \frac{L_n^{(k)}}{n} \geq \alpha
\]
a.s. on survival for all $\alpha \in (\alpha^\ast, 1)$. Therefore 
\begin{equation}
\label{liminf}
\liminf_{n \to \infty} \frac{L_n^{(k)}}{n} \geq \alpha^\ast
\end{equation}
a.s. on survival and the same argument applies to $\hat{L}_n^{(k)}$ and $\tilde{L}_n^{(k)}$. Putting together \eqref{limsup} and \eqref{liminf} we see that 
\[
\lim_{n \to \infty} \frac{L_n^{(k)}}{n} = \lim_{n \to \infty} \frac{\hat{L}_n^{(k)}}{n} = \lim_{n \to \infty} \frac{\tilde{L}_n^{(k)}}{n} = \alpha^\ast
\] 
a.s. on survival.
\end{proof}

\begin{proof}[Proof of convergence in Theorem \ref{a_s} along integers]
The key observations are that for any times $s \leq t$ it is true that
\[
L_t^{(k)} \leq L_s^{(k)} + (t-s)
\]
(since over the time interval $[s,t]$ the $k$th longest pendant edge cannot increase by more than $t-s$),
\[
\hat{L}_s^{(k)} \leq \hat{L}_t^{(k)}
\]
and
\[
\tilde{L}_s^{(k)} \leq \tilde{L}_t^{(k)}
\]
(since $k$th longest edge and $k$th longest interior edge can not get shorter over time). Thus for any $t \in [0, \infty)$ it is true that 
\[
\frac{L_{\lceil t \rceil}^{(k)}}{\lceil t \rceil} \cdot \frac{\lceil t \rceil}{t} - \frac{1}{t} = \frac{L_{\lceil t \rceil}^{(k)} - 1}{t} \leq \frac{L_t^{(k)}}{t} \leq \frac{L_{\lfloor t \rfloor}^{(k)} + 1}{t} = \frac{L_{\lfloor t \rfloor}^{(k)}}{\lfloor t \rfloor} \cdot \frac{\lfloor t \rfloor}{t} + \frac{1}{t},
\]
\[
\frac{\hat{L}_{\lfloor t \rfloor}^{(k)}}{\lfloor t \rfloor} \cdot \frac{\lfloor t \rfloor}{t} = \frac{\hat{L}_{\lfloor t \rfloor}^{(k)}}{t} \leq \frac{\hat{L}_t^{(k)}}{t} \leq \frac{\hat{L}_{\lceil t \rceil}^{(k)}}{t} = \frac{\hat{L}_{\lceil t \rceil}^{(k)}}{\lceil t \rceil} \cdot \frac{\lceil t \rceil}{t}
\]
and
\[
\frac{\tilde{L}_{\lfloor t \rfloor}^{(k)}}{\lfloor t \rfloor} \cdot \frac{\lfloor t \rfloor}{t} = \frac{\tilde{L}_{\lfloor t \rfloor}^{(k)}}{t} \leq \frac{\tilde{L}_t^{(k)}}{t} \leq \frac{\tilde{L}_{\lceil t \rceil}^{(k)}}{t} = \frac{\tilde{L}_{\lceil t \rceil}^{(k)}}{\lceil t \rceil} \cdot \frac{\lceil t \rceil}{t}.
\]
Taking $t \to \infty$ yields the results.
\end{proof}

\noindent

\end{document}